\newtheorem{thm}{Theorem}[section]
\newtheorem{theorem}[thm]{Theorem}
\newtheorem{lemma}[thm]{Lemma}
\newtheorem{prop}[thm]{Proposition}
\theoremstyle{definition}
\newtheorem{example}[thm]{Example}
\newtheorem{rem}[thm]{Remark}
\newtheorem{remark}[thm]{Remark}
\newcommand{\be}[1]{\begin{equation}\label{#1}}
\newcommand{\ee}{\end{equation}}
\newcommand{\ba}{\begin{array}}
\newcommand{\ea}{\end{array}}
\newcommand{\bal}{\begin{aligned}}
\newcommand{\eal}{\end{aligned}}
\newcommand{\R}{\mathbb{R}}
\newcommand{\N}{\mathbb{N}}
\newcommand{\Z}{\mathbb{Z}}
\newcommand{\E}{\mathbb{E}}
\newcommand{\p}{\mathbb{P}}
\newcommand{\calC}{\mathcal{C}}
\newcommand{\calM}{\mathcal{M}}
\newcommand{\cC}{\mathcal{C}}
\newcommand{\cD}{\mathcal{D}}
\newcommand{\cF}{\mathcal{F}}
\newcommand{\cI}{\mathcal{I}}
\newcommand{\cJ}{\mathcal{J}}
\newcommand{\cL}{\mathcal{L}}
\newcommand{\cM}{\mathcal{M}}
\newcommand{\cP}{\mathcal{P}}
\newcommand{\cT}{\mathcal{T}}
\newcommand{\cV}{\mathcal{V}}
\newcommand{\cW}{\mathcal{W}}
\newcommand{\eps}{\varepsilon}
\newcommand{\ssup}[1] {{\scriptscriptstyle{({#1}})}}
 \newcommand{\supp}{{\rm supp}}
\renewcommand{\rho}{\varrho}
\newcommand{\spaeter}[1]{}
\newcommand{\bE}{\ensuremath{\mathbb{E}}}
\newcommand{\bN}{\ensuremath{\mathbb{N}}}
\newcommand{\bP}{\ensuremath{\mathbb{P}}}
\newcommand{\bR}{\ensuremath{\mathbb{R}}}
\newcommand{\bT}{\ensuremath{\mathbb{T}}}
\newcommand{\bZ}{\ensuremath{\mathbb{Z}}}
\newcommand{\ind}{\ensuremath{\mathbbm{1}}}
\newcommand{\ddx}[1][1]{\ifnum#1=1 \frac{d}{dx} \else \frac{d^{#1}}{dx^{#1}} \fi}
\newcommand{\ddy}[1][1]{\ifnum#1=1 \frac{d}{dy} \else \frac{d^{#1}}{dy^{#1}} \fi}
\newcommand{\ddt}[1][1]{\ifnum#1=1 \frac{d}{dt} \else \frac{d^{#1}}{dt^{#1}} \fi}
\newcommand{\bfx}{{\mathbf x}}
\newcommand{\bfy}{{\mathbf y}}
\newcommand{\bfX}{{\mathbf X}}
\newcommand{\bfY}{{\mathbf Y}}
\begin{document}

\begin{center}
{\Large \bf Entrance laws for annihilating Brownian motions \\[1mm]
and the continuous-space voter model
}
\\[5mm]
\vspace{0.7cm}
\textsc{Matthias Hammer\footnote{Institut f\"ur Mathematik, Technische Universit\"at Berlin, Stra\ss e des 17. Juni 136, 10623 Berlin, Germany.},
Marcel Ortgiese\footnote{Department of Mathematical Sciences, University of Bath, Claverton Down, Bath, BA2 7AY,
United Kingdom.} and Florian V\"ollering\footnote{Fakult\"{a}t f\"ur Mathematik und Informatik, Universit\"at Leipzig, 
Augustusplatz 10, 04109 Leipzig, Germany.
}
} 
\\[0.8cm]
\fboxsep2mm
{\small 6 March 2019} 
\end{center}

\vspace{0.3cm}

\begin{abstract}\noindent 
Consider a system of particles moving independently as Brownian motions until two of them meet, 
when the colliding pair annihilates instantly. 
The construction of such a system of annihilating Brownian motions (aBMs) is straightforward as long as we start with a finite number of particles, but is more involved for infinitely many particles. 
In particular, if we let the set of starting points become increasingly dense in the real line it is not obvious whether the resulting systems of aBMs converge and what the possible limit points (entrance laws) are. 
In this paper, we show that aBMs arise as the interface model of the continuous-space voter model. This link allows us to provide a full classification of entrance laws for aBMs. 
We also give some examples showing how different entrance laws can be obtained via finite approximations. 
Further, we discuss the relation of the continuous-space voter model to the stepping stone and other related models.
Finally, we obtain an expression for the $n$-point densities of aBMs starting from an arbitrary entrance law.

  \par\medskip

  \noindent\footnotesize
  \emph{2010 Mathematics Subject Classification}:
  Primary\, 60K35,  \ Secondary\, 60J68, 
  60H15. 
  \end{abstract}

\noindent{\slshape\bfseries Keywords.} Annihilating Brownian motions, entrance laws, voter model, stepping stone model, symbiotic branching, moment duality.

\section{Introduction}\label{intro}

Consider a system of particles moving independently as Brownian motions such that whenever two of them meet, the colliding pair annihilates instantly. As long as we start with a finite number of particles, 
the construction of such a system of annihilating Brownian motions (from now on called aBMs) is straightforward.
It is also possible to start aBMs from infinitely many particles, provided that the initial positions do not accumulate, i.e.\ form a \emph{discrete} and \emph{closed} {(or equivalently, locally finite)} subset of the real line. The construction of such an infinite system is already not completely trivial, see e.g.\ \cite[Sec.\ 4.1]{TZ11} or Section \ref{sec:construction} below for some details.
Thus a suitable state space for the evolution of aBMs is given by
\[\cD:=\{\bfx\subseteq\R:\bfx\text{ is discrete and closed}\},\]
and for each $\bfx\in\cD$ a system of aBMs starting from $\bfx$ can be constructed as a (strong) Markov process $\bfX^\bfx=(\bfX^\bfx_t)_{t\ge0}$ taking values in $\cD$.

Now let $\bfx_{n}\in\cD$ be a sequence of discrete closed subsets of $\R$ which eventually become dense in the real line. 
We can ask the question, for which such sequences the corresponding aBM processes $\bfX^{\bfx_n}$ converge and what the possible limit points are. 
Intuitively, such a limit should correspond to a system of aBMs `started everywhere on the real line'. 
More formally, such a limit gives rise to an \emph{entrance law} for the semigroup of aBMs on $\cD$ (see \eqref{eq:entrance_law} below where we recall the formal definition). 
However, it is not clear a priori whether all asymptotically dense sets of starting points will lead to the same entrance law.
This is in contrast to \emph{coalescing} Brownian motions (from now on called cBMs), which have a monotonicity property. 
For cBMs, it is possible to add initial particles one by one, and as long as the asymptotic set of starting points is dense one ends up with a universal maximal object, 
the \emph{Arratia flow}, see \cite{A79}.
Thus in the coalescing case there is a unique maximal entrance law, where Brownian motions are started everywhere on the real line. 
When starting cBMs in all space-time points, the resulting object is called the \emph{Brownian web}, see e.g.\ \cite{SSS17} for a recent survey.

For the annihilating case, in \cite{TZ11} Tribe and Zaboronski define a corresponding `maximal' entrance law as a `thinned' version of the maximal entrance law for cBMs
(see Sec.\ 2.1 of their paper for the well-known thinning relation linking coalescing and annihilating systems). 
Moreover, they argue that this entrance law can be approximated by aBMs started from the lattice $\frac{1}{n}\Z$, or from points of a Poisson process with intensity $n$, by sending $n\to\infty$,
but point out that the domain of attraction of this entrance law is not clear.

In this paper, we show that indeed different approximations of $\R$ by asymptotically dense sets will typically lead to different entrance laws for aBMs, as opposed to the case for cBMs. 
For example, if one starts a system of aBMs in $\bfx_{n}:=\frac1n\bZ+\{0,\frac1{n^2}\}$ so that starting points appear in close-by pairs, then typically the pairs annihilate and in the limit there are no surviving annihilating Brownian motions at all.
In our main result, Theorem \ref{thm:characterization_entrance_laws}, we will give a complete classification of entrance laws for aBMs via identification with measurable functions $u:\bR\to[0,1]$.

Our classification of the entrance laws is based on a close connection of
aBMs with the \emph{continuous-space voter model}, which is a generalization of the classical discrete voter model to a continuous space setting. 
We will review this model and some of the relevant literature in Section \ref{sec:cSSM}.

As an application of this relation and the technique of duality, we can compute $n$-point densities for aBMs, i.e.\ the probability density of finding $n$ particles at given points.
There has been some interest in these $n$-point densities recently, and indeed the main result in ~\cite{TZ11} is to show that a system of cBMs, but  also of aBMs, started from the `maximal' entrance law forms a Pfaffian point process and to give an expression for the densities. In particular, \cite{TZ11}  show that these expressions can be used to derive large-time asymptotics.

In contrast, our result allows us to calculate  $n$-point densities for any entrance law. For example, we can explicitly compute the $1$-particle density function and compare it to the density function under the entrance law constructed in  \cite{TZ11}.
We can show that the latter is only maximal when compared to homogeneous entrance laws, so that a more appropriate name would be `maximal  homogeneous'. 
Our technique also gives an expression for $n$-point densities with $n >1$, which is however less explicit.

The paper is structured as follows: In Section~\ref{sec:classification}, we will state 
{our results, namely the classification of entrance laws for aBMs and the corresponding $n$-point densities.
}
In Section~\ref{sec:cSSM}, we will explain the connection to the continuous-space voter model. 
We use the relation to the voter model and its duality in Section~\ref{sec:proofs} to prove the results
of 
Section~\ref{sec:classification}. 
In the appendix, we recall in Section~\ref{sec:construction} how to construct aBMs starting from an infinite discrete closed set
and prove {two technical results} in Section~\ref{sec:technicalities}.

\subsection{Notation and preliminaries}\label{ssec:notation}
The following notation and definitions will be used throughout the paper: Recall that $\cD$ denotes the space of discrete closed subsets of $\R$,
{
and that we write $\bfx$ for a generic element of $\cD$. 
With slight abuse of notation, we will occasionally 
use the same symbol for vectors and write also $\bfx=(x_1,\ldots,x_n)\in\R^n$. 
We denote by $\bR^{n,\uparrow}:=\{\bfx\in\bR^n: x_1<\cdots<x_n\}$ resp.\ $\bR^{n,\downarrow}:=\{\bfx\in\bR^n: x_1>\cdots>x_n\}$ the space of increasing resp.\ decreasing vectors in $\bR^n$.

Moreover, recall that we denote by $\bfX^\bfx=(\bfX_t^\bfx)_{t\ge0}$ 
a countable system of annihilating Brownian motions starting from $\bfx\in\cD$.
See e.g.\ \cite[Sec.\ 4.1]{TZ11} and Section \ref{sec:construction} below for two possible approaches to the construction of $\bfX^\bfx$ in case that the initial condition $\bfx$ is countably infinite. 
Considering this system as a (strong) Markov process with state space $\cD$, 
we write also $\bfX=(\bfX_t)_{t\ge0}$ for the canonical process on the path space $\cD^{[0,\infty)}$ and $(\p_\bfx)_{\bfx\in\cD}$ for the corresponding family of probability measures 
such that $\bfX$ starts from $\bfx\in\cD$ under $\p_\bfx$. 
The corresponding Markov semigroup on $\cD$ will be denoted by $(P_t)_{t\ge0}$. 
Note that we did not mention any topology for $\cD$. The right choice of topology is an important point which we will discuss in Section~\ref{sec:topology}.

Finally, for $\bfx\in\cD$ we denote by
\begin{equation}\label{eq:notation-cBMs}
\bfY^\bfx_t=\{Y_t^\ssup{x} \,|\, x\in\bfx\},\qquad t\ge0
 \end{equation}
a system of \emph{coalescing} Brownian motions starting from $\bfx$.
Here, $Y_t^\ssup{x}$ is the position at time $t$ of the particle in the system which started in $x\in\bfx$ at time $t=0$.
We may imagine that each particle follows the paths of a Brownian motion starting in $x$ until it collides with another motion, upon which the two particles are `merged' and evolve together.

}

\section{{Results}}\label{sec:classification}
In this section we state our main results. 
We will classify entrance laws for aBMs by embedding $\cD$ into a compact space and extending $\bfX$ to a Feller process on this space, for which all entrance laws are closable, and which we can describe explicitly.

\subsection{Entrance laws}
Recall that a family $\mu=(\mu_t)_{t>0}$ of probability measures on (the Borel $\sigma$-algebra of) $\cD$ is called a \emph{probability entrance law} for the semigroup $(P_t)_{t\ge0}$ if 
\begin{equation}\label{eq:entrance_law}\mu_sP_{t-s}=\mu_t \qquad \text{for all }0<s<t.\end{equation} 
See e.g. \cite[ Appendix A.5]{Li} or \cite{Sharpe} for the general theory of entrance laws.
Roughly speaking, an entrance law corresponds to a Markov process $(\bfX_t)_{t>0}$ with time-parameter set $(0,\infty)$ and `without initial condition', whose one-dimensional distributions are given by~$\mu_t$.

Let
\[ \cM_1(\bR):=\{u(x)\,dx\,|\,u:\bR\to[0,1]\text{ measurable}\}\]
denote the space of all absolutely continuous measures on $\R$ with densities taking values in $[0,1]$. We define an equivalence relation $\sim$ on $\cM_1(\R)$ by identifying $u$ with $1-u$
and consider the quotient space
\[ \cV := \cM_1(\R) /\! \sim.\]
We write $v=[u]=\{u,1-u\}$ for elements of $\cV$, i.e.\ for the equivalence classes under $\sim$.

Our main result, Theorem~\ref{thm:characterization_entrance_laws}, states that there is a bijective correspondence between probability entrance laws $(\mu_t)_{t>0}$ for the semigroup $(P_t)_{t\ge0}$ of aBMs on $\cD$
and probability measures $\nu$ on $\cV$. 
The subtle point is that this only works with 
the right topology on $\cD$, which we will describe in the next subsection.

\subsection{The topology on $\cD$}\label{sec:topology}

In order to turn $\cD$ into a topological space, as in \cite{TZ11} one may identify $\bfx\in\cD$ with the locally finite point measure $\sum_{x\in\bfx}\delta_x$,
thus embedding $\cD$ into the space of locally finite measures, and use the topology of vague convergence. 
Note however that employing this topology leads to c\`adl\`ag but not continuous paths for the process $\bfX$, since at annihilation events the total mass of the finite point measure changes.

We will introduce a different (weaker) topology on $\cD$ under which the paths of $\bfX$ are automatically continuous and which allows us to classify the entrance laws. 
The main idea is to regard the positions of the annihilating particles as `interfaces' of two measures on the real line with complementary support, and to use these measures to obtain a topology better adapted to the evolution of aBMs.
In order to make this precise, we need to introduce some additional notation and definitions. Recall that $\cM_1(\bR)$ denotes the space of all absolutely continuous measures on $\R$ with densities taking values in $[0,1]$.
We will usually use the same symbol to denote the absolutely continuous measure and its density.
We endow $\calM_1(\R)$ with the vague topology, i.e. $u^\ssup{n}\to u$ in $\cM_1(\bR)$ iff $\langle u^\ssup{n},\phi\rangle\to\langle u,\phi\rangle$ for all $\phi\in\cC_c(\bR)$.
It is easy to see that with this topology, $\cM_1(\R)$ is a compact space, see Lemma \ref{lem:compact} below. 
For $u\in\cM_1(\R)$, we define the \emph{interface} (of $u$ with its complement $1-u$) as
\[\cI(u):=\supp(u)\cap\supp(1-u),\]
where $\supp(u)$ denotes the measure-theoretic support of $u$, i.e.
\[ \supp(u):=\{x\in\R \,|\,  u\left(B_\eps(x)\right)>0 \text{ for all }\eps>0\} . \]
We call the elements of $\cI(u)$ \emph{interface points}.
{Note that $\cI(u)$ is always closed.}
The subspace of all $u\in\cM_1(\R)$ with discrete interface is denoted by
\[\cM_1^d(\R):=\{u\in\cM_1(\R)\,|\,\cI(u)\in\cD\},\]
which is dense in $\cM_1(\R)$, see Lemma \ref{lem:dense} below.
Note that for each $u\in\cM_1^d(\R)$, we may choose a version of its density taking values in $\{0,1\}$ and which is locally constant on each of the countably many disjoint open intervals in $\R\setminus\cI(u)$, where it takes the value $0$ or $1$ alternatingly.
In particular, for $u\in\cM_1^d(\R)$ the measure-theoretic and function-theoretic supports coincide. 

When restricted to $\cM_1^d(\R)$, the `interface operator' gives us a mapping $\cI:\cM_1^d(\R)\to\cD$ which is clearly surjective but not injective, since both $u$ and $1-u$ have the same interface.
Thus with the equivalence relation $\sim$ on $\cM_1(\R)$ identifying $u$ and $1-u$, we consider the quotient spaces 
\[ \cV^d := \cM_1^d(\R) /\! \sim\]
and $\cV = \cM_1(\R) /\!\!\sim$ introduced above. 
Endowed with the quotient topology, $\cV$ is also compact and $\cV^d$ is dense in $\cV$.
Note that the `interface operator' $\cI$ is well-defined on the equivalence classes and thus induces a mapping (which we denote by the same symbol)
\[\cI:\cV^d\to\cD,\]
which is easily seen to be a bijection and
induces in a canonical way a topology on $\cD$, generated by the system
\[\{\cI(U): U\subseteq\cV^d\text{ open}\}.\]
By definition, this is the coarsest topology on $\cD$ with respect to which $\cI^{-1}:\cD\to\cV^d$ is continuous, 
and with this topology $\cD$ is homeomorphic to $\cV^d$. We note that this topology on $\cD$ is strictly weaker than the topology used in \cite{TZ11}.

\subsection{{Classification of entrance laws}}

Now we return to the aBM process $(\bfX_t)_{t\ge0}$ on $\cD$ with semigroup $(P_t)_{t\ge0}$.
Via the homeomorphism $\cI^{-1}$, it induces a semigroup $(T_t)_{t\ge0}$ on $\cV^d$:
\begin{equation}\label{eq:defn_Q}
T_t(v;\cdot):=P_t(\cI(v);\cdot)\circ\cI,\qquad v\in\cV^d,\;t\ge0.
\end{equation}
Our main result states that this semigroup can be extended to a Feller semigroup $(\hat T_t)_{t\ge0}$ on the compact space $\cV$ 
which can be used to characterize the entrance laws for aBMs.
{In order to state this characterization precisely, we need the following notation: 

Given $u\in\cM_1(\R)$ and a system of cBMs $(\bfY_t^\bfx)_{t\ge0}$ 
starting from some $\bfx=(x_1,\ldots,x_n)\in\R^n$, let 
$\left(\chi_y\right)_{y\in\bfY_t^\bfx}$
be a family of Bernoulli random variables indexed by the cBM positions 
at time $t>0$ 
with conditional distribution 
\begin{equation}\label{defn:Bernoulli}
\cL\left(\big(\chi_y\big)_{y\in\bfY_t^\bfx}\,\big|\,\bfY^\bfx\right)=\bigotimes_{y\in\bfY_t^\bfx}\mathrm{Ber}\left(u(y)\right).
\end{equation}
We suppress the dependence on $u$ in the notation for these random variables.\footnote{Of course, the density $u\in\cM_1(\R)$ is only defined up to a Lebesgue-null set, but all our results will be independent of the version of $u$ we choose.}
Recalling  
our notation \eqref{eq:notation-cBMs}, note that for $i\ne j$ the random variables $\chi_{Y_t^\ssup{x_i}}$ and $\chi_{Y_t^\ssup{x_j}}$ are either identical or independent, 
depending on whether or not the Brownian motions starting from $x_i$ and $x_j$ have coalesced up to time $t$.
}

\newpage
\begin{theorem}\label{thm:characterization_entrance_laws}
Let $\cD$ be endowed with the topology introduced in Section \ref{sec:topology}.
\begin{itemize}
\item[a)] The semigroup $(T_t)_{t\ge0}$ on $\cV^d$ defined in \eqref{eq:defn_Q} can be extended to a Feller semigroup $(\hat T_t)_{t\ge0}$ on $\cV$ such that
\begin{equation}\label{eq:concentration}
\text{for all }v\in\cV\text{ and }t>0:\; \hat T_t(v;\cdot)\text{ is concentrated on }\cV^d.
\end{equation}
The corresponding Feller process, which we denote by $(V_t)_{t\ge0}$, has continuous paths
{
and is characterized by the following `moment duality':
writing $V_t=[U_t]=\{U_t,1-U_t\}\in\cV$,
we have for each $v=[u]\in\cV$, $n\in\N$ and Lebesgue-almost all $\bfx=(x_1,\ldots,x_{2n})\in\R^{2n}$ 
\begin{align}\bal\label{eq:duality_3}
\p_v\left(\bigcap_{i=1}^n\{U_t(x_{2i-1})=U_t(x_{2i})\}\right)=\p\left(\bigcap_{i=1}^n\big\{\chi_{Y_t^\ssup{x_{2i-1}}}=\chi_{Y_t^\ssup{x_{2i}}}\big\}\right),\qquad t>0,
\eal\end{align}
where $(\bfY_t^{\bfx})_{t\ge0}$ 
is a system of cBMs starting from $\bfx$ and 
{the Bernoulli random variables $\chi_{Y_t^\ssup{x_i}}$ are} as in \eqref{defn:Bernoulli}.
}
\item[b)]
There is a bijective correspondence between probability entrance laws $\mu=(\mu_t)_{t>0}$ for the semigroup $(P_t)_{t\ge0}$ of aBMs on $\cD$ and probability measures $\nu$ on $\cV$, given by the formula
\begin{equation}\label{eq:correspondence}
\mu_t=\nu \hat T_t\circ\cI^{-1}=\cL\left(\cI(V_t)\,|\,\p_{\nu}\right),\qquad t>0.
\end{equation}
{
For $\nu=\delta_v$ with $v=[u]\in\cV$, the corresponding entrance law $\mu$ is characterized by
\begin{equation}\label{eq:correspondence_0}
\p_{\mu}\left(\bigcap_{i=1}^n\{|\bfX_t\cap[x_{2i-1},x_{2i}]|\text{ is even}\}\right)=\p\left(\bigcap_{i=1}^n\big\{\chi_{Y_t^\ssup{x_{2i-1}}} = \chi_{Y_t^\ssup{x_{2i}}}\big\}\right)
\end{equation}
for all $t>0$, $n\in\N$ and $\bfx=(x_1,\ldots,x_{2n})\in\R^{2n,\uparrow}$. 
}
\end{itemize}
\end{theorem}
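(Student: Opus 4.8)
The plan is to route everything through the continuous-space voter model and its moment duality with coalescing Brownian motions. For $u\in\cM_1(\R)$ I would run the voter model $(U_t)_{t\ge0}$ started from $u$, interpreting $u(x)$ as the probability that the initial opinion at $x$ is $1$. The graphical/dual construction assigns to each space-time point $(x,t)$ an ancestral lineage that performs a Brownian motion backwards in time and coalesces with other lineages upon meeting, so that $U_t(x)$ equals the initial randomized opinion read off at the foot $Y_t^\ssup{x}$ of that lineage. This yields the joint identity $(U_t(x_1),\dots,U_t(x_n))\stackrel{d}{=}(\chi_{Y_t^\ssup{x_1}},\dots,\chi_{Y_t^\ssup{x_n}})$ with $(\chi_y)$ the Bernoulli variables of \eqref{defn:Bernoulli}, from which the duality \eqref{eq:duality_3} is immediate once the observables are pushed to the quotient.

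For part a) I would first observe that the voter dynamics commute with the involution $u\mapsto 1-u$, so the voter semigroup on $\cM_1(\R)$ descends to a semigroup on $\cV=\cM_1(\R)/\!\sim$; I then check that on $\cV^d$ it agrees with $(T_t)$ from \eqref{eq:defn_Q}, i.e.\ that the interface of the voter configuration evolves as the system of aBMs, as developed in Section~\ref{sec:cSSM}. To obtain the Feller property I would test $\hat T_t$ against the $\sim$-invariant functions obtained by smearing the parity observables in \eqref{eq:duality_3} against continuous test densities in the variables $x_1,\dots,x_{2n}$; such functions are continuous on the compact space $\cV$ (Lemma~\ref{lem:compact}), form a point-separating algebra, and are therefore dense in $C(\cV)$ and measure-determining by Stone--Weierstrass. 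The dual representation of $\hat T_t$ on this class inherits continuity in the starting configuration and in $t$ from the coalescing-BM mechanism, giving $\hat T_t:C(\cV)\to C(\cV)$ with $\hat T_t\to\mathrm{id}$ as $t\downarrow0$. Continuity of paths is exactly what the vague topology on $\cV$ was designed to furnish: an annihilation deletes an interface point but leaves the measure $U_t\,dx$ unchanged, so $t\mapsto V_t$ is continuous.

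The crux of part a) is the concentration statement \eqref{eq:concentration}: starting from an arbitrary, possibly fuzzy, density $v=[u]$ the configuration must have a \emph{discrete} interface at every $t>0$. I would prove this by a first-moment estimate bounding the expected number of interface points in a bounded interval $[a,b]$ at time $t$. Writing this count as a limit, over finer partitions, of sums of indicators $\{U_t(x)\ne U_t(x')\}$ and evaluating each term through the two-point dual (two coalescing Brownian motions), the expected local density of interface points is controlled by the probability that two lineages started close together have not yet coalesced, which for fixed $t>0$ integrates to a finite bound over $[a,b]$; hence the interface is locally finite, i.e.\ lies in $\cD$, almost surely. This is the step I expect to be the main obstacle, since it quantifies the instantaneous sharpening of the voter model and is where the chosen topology must be reconciled with the dual computation.

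Given a Feller process on the compact space $\cV$ satisfying \eqref{eq:concentration}, part b) follows from the general theory of entrance laws. For existence, any probability measure $\nu$ on $\cV$ yields $\mu_t:=\nu\hat T_t\circ\cI^{-1}$, which is concentrated on $\cD$ by \eqref{eq:concentration} and satisfies \eqref{eq:entrance_law} because $\cI$ conjugates $(P_t)$ to $(\hat T_t)$ via \eqref{eq:defn_Q} and $\hat T$ is a semigroup. For injectivity I recover $\nu=\lim_{t\downarrow0}\nu\hat T_t$ using the Feller property. For surjectivity I transport a given entrance law $(\mu_t)$ to $\cV^d$ via $\cI^{-1}$, use weak-$*$ compactness of $\cV$ to extract a limit point $\nu$ of $(\mu_t\circ\cI)$ as $t\downarrow0$, and use the entrance relation together with Feller continuity to identify $\mu_t=\nu\hat T_t\circ\cI^{-1}$ and to see that the limit is unique. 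Finally, \eqref{eq:correspondence_0} follows by specialising to $\nu=\delta_v$ and noting that, for Lebesgue-a.e.\ increasing $\bfx$, the event $\{|\bfX_t\cap[x_{2i-1},x_{2i}]|\text{ is even}\}$ coincides with $\{U_t(x_{2i-1})=U_t(x_{2i})\}$ for the $\{0,1\}$-valued configuration $U_t$; applying \eqref{eq:duality_3} then produces the right-hand side.
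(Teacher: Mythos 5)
Your overall architecture is the same as the paper's: pass to the continuous-space voter model, define $\hat T_t$ as the quotient of the voter semigroup under $u\sim 1-u$, read off the duality \eqref{eq:duality_3} from the graphical/dual construction, use a point-separating multiplicative family of smeared parity observables to see that the duality is characterizing, and obtain part b) from compactness of $\cP(\cV)$ together with the Feller property. Your injectivity argument (recovering $\nu=\lim_{t\downarrow0}\nu\hat T_t$ by strong continuity) is in fact slightly cleaner than the paper's, which invokes injectivity of $\nu\mapsto\cL(V_t\,|\,\p_\nu)$ at each fixed $t>0$.

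The one place where you genuinely diverge --- and where your argument as written has a gap --- is the concentration property \eqref{eq:concentration}. The paper does not prove this from scratch; it imports it as \eqref{eq:clustering} from Thm.~\ref{thm:interfaces}b), i.e.\ from \cite{HOV15} (and likewise imports the identification of $(\hat T_t)$ restricted to $\cV^d$ with $(T_t)$ from Thm.~\ref{thm:interfaces}a)). Your first-moment sketch is not yet a proof: the two-point dual and the non-coalescence estimate do give a bound of order $(b-a)/\sqrt{t}$, uniform in $\delta$, on $\frac1\delta\int_a^b\p\left(U_t(x)\ne U_t(x+\delta)\right)dx$, hence on expected partition sign-change counts; but the interface $\cI(u_t)=\supp(u_t)\cap\supp(1-u_t)$ is defined via measure-theoretic supports of an a.e.-defined density, and partition counts do not majorize $|\cI(u_t)\cap[a,b]|$ unless you already know that $u_t$ is a.e.\ locally constant --- which is essentially the claim being proved. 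To close the argument you need (i) the separation-of-types property \eqref{eq:sep_types}, so that $u_t$ is $\{0,1\}$-valued a.e., and (ii) a BV-type lemma: a $\{0,1\}$-valued function with $\liminf_{\delta\downarrow0}\frac1\delta\int_a^b|u(x+\delta)-u(x)|\,dx<\infty$ has bounded variation on $[a,b]$ and is therefore a.e.\ the indicator of a finite union of intervals, whence its interface in $[a,b]$ is finite. Neither step appears in your outline; the quickest repair is to cite Thm.~\ref{thm:interfaces} as the paper does. A smaller point: path continuity of $(V_t)_{t\ge0}$ should be inherited from the path continuity of the voter process asserted in Thm.~\ref{thm:voter}, rather than argued from ``annihilation leaves the measure unchanged'', which only addresses starting points in $\cV^d$.
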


{
\begin{remark}
Note that since replacing $u$ by $1-u$ is equivalent (in distribution) to replacing $\chi_{Y_t^\ssup{x_i}}$ by $1-\chi_{Y_t^\ssup{x_i}}$, the RHS of \eqref{eq:duality_3} and \eqref{eq:correspondence_0} depends indeed only on the equivalence class $v=[u]\in\cV$.
Also note that if $u$ is $\{0,1\}$-valued, 
we can choose $\chi_{Y_t^\ssup{x_i}}\equiv u(Y_t^\ssup{x_i})$ and so in this case \eqref{eq:correspondence_0} reads
\begin{equation}\label{eq:border}
\p_{\mu}\left(\bigcap_{i=1}^n\left\{|\bfX_t\cap[x_{2i-1},x_{2i}]|\text{ is even}\right\}\right)=\p\left(\bigcap_{i=1}^n\{u(Y_t^\ssup{x_{2i-1}}) = u(Y_t^\ssup{x_{2i}})\}\right).
\end{equation}
We observe that this formula can be interpreted as an analogue in continuous space of the so-called \emph{border equation} characterizing annihilating random walks on $\Z$, see e.g.\ \cite[Sec.\ 2]{BG80}. 
\end{remark}
}

Our next result clarifies the question raised in the introduction concerning different approximations of the real line by asymptotically dense subsets.
In particular, it shows that each entrance law for aBMs can be approximated by a sequence of (random) initial conditions in $\cD$.

\begin{theorem}\label{thm:convergence}
Let $\cD$ be endowed with the topology introduced in Section \ref{sec:topology}, and let $(V_t)_{t\ge0}$ denote the Feller process from Theorem \ref{thm:characterization_entrance_laws}.
Let $(\mu^\ssup{n})_{n\in\N}$ be a sequence of probability measures on $\cD$, and consider the corresponding sequence 
of aBM processes started according to the (random) initial condition $\mu^\ssup{n}$. 
Then $\cL\big((\bfX_t)_{t>0}\,\big|\,\p_{\mu^\ssup{n}}\big)$ converges weakly in $\cC_{(0,\infty)}(\cD)$ 
iff the sequence $(\mu^\ssup{n}\circ\cI)_{n\in\N}$ of probability measures on $\cV^d$ converges weakly to some probability measure $\nu$ on $\cV$, in which case
\begin{equation}\label{eq:convergence}
\lim_{n\to\infty}\cL\big((\bfX_t)_{t>0}\,\big|\,\p_{\mu^\ssup{n}}\big)=\cL\big((\cI(V_t))_{t>0}\,\big|\, \p_{\nu}\big)\quad\text{on }\cC_{(0,\infty)}(\cD).
\end{equation}
Moreover, for any entrance law $(\mu_t)_{t>0}$ for the semigroup $(P_t)_{t\ge0}$ of aBMs there exists a sequence $(\mu^\ssup{n})_{n\in\N}$ of probability measures on $\cD$ such that 
\[\mu_t=\lim_{n\to\infty}\mu^\ssup{n}P_t,\qquad t>0.\] 
\end{theorem}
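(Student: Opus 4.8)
The plan is to transport everything to the compact space $\cV$ via the homeomorphism $\cI$ and to exploit the Feller process $(V_t)_{t\ge0}$ from Theorem~\ref{thm:characterization_entrance_laws}a). Writing $\nu^\ssup{n}:=\mu^\ssup{n}\circ\cI$ for the initial laws pushed to $\cV^d\subseteq\cV$, the definition \eqref{eq:defn_Q} of $(T_t)$ together with the concentration property \eqref{eq:concentration} shows that for each $n$ the law $\cL\big((\bfX_t)_{t>0}\,\big|\,\p_{\mu^\ssup{n}}\big)$ on $\cC_{(0,\infty)}(\cD)$ is exactly the pathwise image under $\cI$ of $\cL\big((V_t)_{t>0}\,\big|\,\p_{\nu^\ssup{n}}\big)$, the latter being concentrated on paths lying in $\cV^d$ for all $t>0$. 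Since $\cI:\cV^d\to\cD$ is a homeomorphism, it induces a homeomorphism of path spaces $\cC_{(0,\infty)}(\cV^d)\to\cC_{(0,\infty)}(\cD)$, so the whole statement reduces to an assertion about the $\cV$-valued Feller process: the path laws $\cL\big((V_t)_{t>0}\,\big|\,\p_{\nu^\ssup{n}}\big)$ converge in the interface topology iff the initial laws $\nu^\ssup{n}$ converge weakly on $\cV$, and then to the law started from the limit $\nu$.

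\textbf{Sufficiency.} First I would treat the direction ``$\nu^\ssup{n}\to\nu\Rightarrow$ path laws converge''. The soft part is that $(\hat T_t)$ is a Feller semigroup on the \emph{compact} metric space $\cV$ with continuous paths, so the map $v\mapsto\p_v$ is continuous from $\cV$ into the space of probability measures on $\cC_{[0,\infty)}(\cV)$ (convergence of finite-dimensional distributions from the Feller property, plus tightness that is uniform over the compact set of initial conditions); hence $\p_{\nu^\ssup{n}}=\int\p_v\,\nu^\ssup{n}(dv)\to\int\p_v\,\nu(dv)=\p_\nu$ weakly on $\cC_{[0,\infty)}(\cV)$, and a fortiori on $\cC_{(0,\infty)}(\cV)$. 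The genuine obstacle is to upgrade this to convergence in the \emph{interface topology}, i.e.\ as measures on $\cC_{(0,\infty)}(\cV^d)$: since $\cV^d$ is dense but not closed in $\cV$, bounded continuous functions on $\cD\cong\cV^d$ need not extend continuously to $\cV$, so weak convergence on the ambient space does not automatically descend to the subspace on which all the measures concentrate. Here I would invoke the moment duality: by \eqref{eq:correspondence_0} (cf.\ \eqref{eq:duality_3}) the finite-dimensional distributions of $\cI(V_t)$ are governed by the parity functionals $\p(|\bfX_t\cap[x_{2i-1},x_{2i}]|\text{ even})$, whose values equal right-hand sides that — because the cBM occupation densities at times $t>0$ are absolutely continuous — are \emph{vaguely continuous} functionals of the starting density $u$ and hence continuous in $v\in\cV$. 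These functionals separate laws on $\cD$ and form a convergence-determining class, which lets me identify the limiting finite-dimensional distributions on $\cD$ directly; combined with tightness of the paths in $\cC_{(0,\infty)}(\cD)$ this yields \eqref{eq:convergence}.

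\textbf{Necessity.} For the converse I would argue by compactness and the bijection of Theorem~\ref{thm:characterization_entrance_laws}b). Since $\cV$ is compact, $(\nu^\ssup{n})$ is relatively compact in the weak topology. Let $\nu_1,\nu_2$ be the limits of two convergent subsequences. Applying the sufficiency direction along each subsequence, the corresponding path laws converge to $\cL\big((\cI(V_t))_{t>0}\,\big|\,\p_{\nu_1}\big)$ and $\cL\big((\cI(V_t))_{t>0}\,\big|\,\p_{\nu_2}\big)$. As the full sequence of path laws is assumed to converge, these two limits coincide; in particular their one-dimensional time-$t$ marginals, the entrance laws $\nu_1\hat T_t\circ\cI^{-1}$ and $\nu_2\hat T_t\circ\cI^{-1}$, agree for every $t>0$. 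By injectivity of the correspondence $\nu\mapsto(\mu_t)_{t>0}$ this forces $\nu_1=\nu_2$, so all subsequential limits agree and $\nu^\ssup{n}$ converges weakly on $\cV$; then \eqref{eq:convergence} follows from the sufficiency direction.

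\textbf{Approximation of arbitrary entrance laws.} Finally, given any entrance law $(\mu_t)_{t>0}$, Theorem~\ref{thm:characterization_entrance_laws}b) supplies a unique probability measure $\nu$ on $\cV$ with $\mu_t=\nu\hat T_t\circ\cI^{-1}$. Because $\cV^d$ is dense in the compact metric space $\cV$, I can choose finitely supported probability measures $\nu^\ssup{n}$ on $\cV^d$ with $\nu^\ssup{n}\to\nu$ weakly, and set $\mu^\ssup{n}:=\nu^\ssup{n}\circ\cI^{-1}$, a probability measure on $\cD$ with $\mu^\ssup{n}\circ\cI=\nu^\ssup{n}$. By the part already proved, $\cL\big((\bfX_t)_{t>0}\,\big|\,\p_{\mu^\ssup{n}}\big)\to\cL\big((\cI(V_t))_{t>0}\,\big|\,\p_\nu\big)$ in $\cC_{(0,\infty)}(\cD)$; since for each fixed $t>0$ the evaluation map $\omega\mapsto\omega(t)$ is continuous on $\cC_{(0,\infty)}(\cD)$, the continuous mapping theorem gives $\mu^\ssup{n}P_t\to\mu_t$ on $\cD$ for every $t>0$, as required. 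The main difficulty throughout is the one flagged in the sufficiency step: reconciling the soft Feller convergence on the compact ambient space $\cV$ with the strictly finer interface topology on $\cD\cong\cV^d$, which is precisely what the duality-based convergence-determining class is used to resolve.
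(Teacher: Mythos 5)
Your necessity argument and your approximation argument are exactly the paper's: relative compactness of $(\nu^\ssup{n})$ from compactness of $\cV$, identification of subsequential limits via the injectivity \eqref{eq:unique} coming from the duality, and for the last part density of $\cV^d$ in $\cV$ plus the Feller property of $(\hat T_t)$. The sufficiency direction also follows the paper's skeleton (Feller convergence of the path laws on $\cC_{[0,\infty)}(\cV)$, then push forward through the homeomorphism $\cI$), but you diverge at the step of descending from $\cC_{(0,\infty)}(\cV)$ to $\cC_{(0,\infty)}(\cV^d)$. The obstacle you flag there is real only in appearance: since all the path laws involved are concentrated on $\cC_{(0,\infty)}(\cV^d)$ (by \eqref{eq:concentration} and path continuity), the standard Portmanteau argument applies directly --- for $G$ open in the subspace, write $G=U\cap\cC_{(0,\infty)}(\cV^d)$ with $U$ open in $\cC_{(0,\infty)}(\cV)$ and use $\liminf_n\p_{\nu^\ssup{n}}(G)=\liminf_n\p_{\nu^\ssup{n}}(U)\ge\p_\nu(U)=\p_\nu(G)$. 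This is what the paper's terse ``thus also'' is doing, and it makes your duality-based detour unnecessary. As written, your detour has a soft spot: you invoke ``tightness of the paths in $\cC_{(0,\infty)}(\cD)$'' without establishing it, and this is not automatic --- $\cV^d$ is dense but not closed in $\cV$, so compact sets in $\cC_{(0,\infty)}(\cD)\cong\cC_{(0,\infty)}(\cV^d)$ do not come for free from compactness of $\cV$. If you replace that step by the restriction-of-weak-convergence lemma above, your proof closes and coincides with the paper's.
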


\begin{example}
To illustrate Theorems \ref{thm:characterization_entrance_laws} and \ref{thm:convergence}, we give various examples showing the effect of different ways of approximating increasingly dense initial conditions for aBMs, see also Figure \ref{fig:simulations}. 
\begin{itemize}
 \item First, consider $\bfx_n=\frac1n\bZ$. Clearly $\cI^{-1}(\bfx_n)$ converges to $[\frac12]$ in $\cV$, and hence by Theorem~\ref{thm:convergence} the system of aBMs starting from $\bfx_n$ converges. 
We have the same limit when $\bfx_n$ is any other regularly spaced lattice with mesh going to zero as $n\to\infty$, or when $\bfx_n$ is the realisation of a Poisson point process of intensity $n$, {as in Figure \ref{fig:simulations}(b)}. These approximations give the `maximal' entrance law considered in \cite{TZ11}. 
 \item
 In the example $\bfx_n=\frac1n\bZ+\{0,\frac1{n^2}\}$ we still have convergence of $\cI^{-1}(\bfx_n)$ in $\cV$, but the limit is $[0]$, which is degenerate and corresponds to the empty system. 
 So indeed in the limit the close-by pairs have annihilated and there are no surviving aBMs. 
 More generally, we have the same limit when $\bfx_n=\frac1n\bZ+\{0,\frac1{n^\alpha}\}$ for some $\alpha>1$.
\item We can also consider $\bfx_n=\frac1n\bZ+\{0,\frac1{4n}\}$, where $\cI^{-1}(\bfx_n)$ converges to $[\frac14]$ in $\cV$, which is different from $[\frac12]$. 
This is an example of an entrance law where we still start aBMs everywhere on the real line just as in $[\frac12]$, but the system `comes down from infinity' in a different way, giving rise to a different law of the aBMs. 
\begin{figure}[htp]
    \centering
    \begin{subfigure}[b]{0.48\textwidth}
\begin{tikzpicture}
\begin{axis}[enlargelimits=false, axis on top, axis equal image,ticks=none,axis lines = middle, 
x label style={at={(axis description cs:1.01,0.0)},anchor=west},
y label style={xshift=-0.5cm},
ylabel={t}, xlabel=$\bT$, 
width=1.1\textwidth]
\addplot graphics [xmin=0,xmax=100,ymin=0,ymax=70] {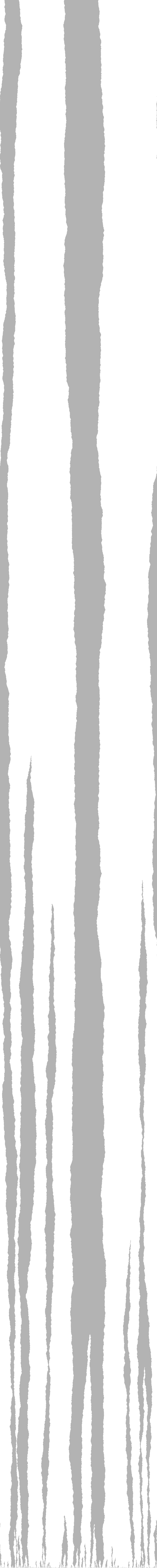};
\end{axis}
\end{tikzpicture}
        \caption{$\bfx_n^{(1)}=\frac1n\bZ$}
    \end{subfigure}
    \hfill
    \begin{subfigure}[b]{0.48\textwidth}
\begin{tikzpicture}
\begin{axis}[enlargelimits=false, axis on top, axis equal image,ticks=none,axis lines = middle, 
x label style={at={(axis description cs:1.01,0.0)},anchor=west},
y label style={xshift=-0.5cm},
ylabel={t}, xlabel=$\bT$, 
width=1.1\textwidth]
\addplot graphics [xmin=0,xmax=100,ymin=0,ymax=70] {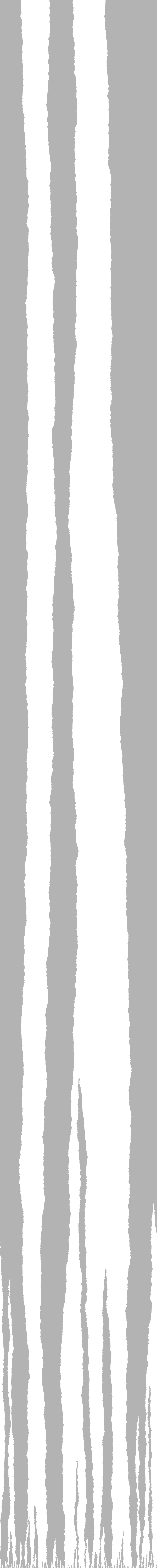};
\end{axis}
\end{tikzpicture}
       \caption{$\bfx_n^{(2)}\sim PPP(n)$}
    \end{subfigure}
\vspace{3mm}
\newline
    \begin{subfigure}[b]{0.48\textwidth}
\begin{tikzpicture}
\begin{axis}[enlargelimits=false, axis on top, axis equal image,ticks=none,axis lines = middle, 
x label style={at={(axis description cs:1.01,0.0)},anchor=west},
y label style={xshift=-0.5cm},
ylabel={t}, xlabel=$\bT$, 
width=1.1\textwidth]
\addplot graphics [xmin=0,xmax=100,ymin=0,ymax=70] {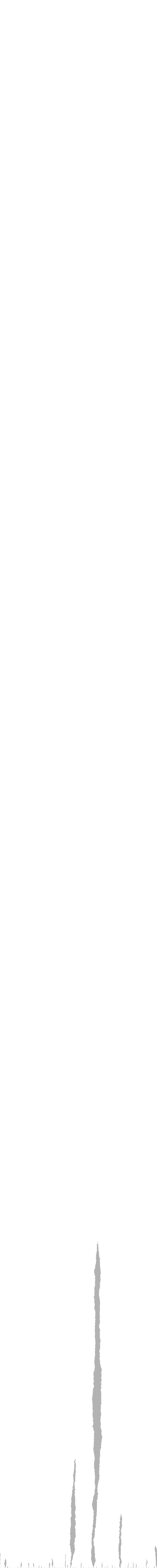};
\end{axis}
\end{tikzpicture}
      \caption{ $\bfx_n^{(3)}=\frac1n\bZ+\{0,\frac1{n^2}\}$}
    \end{subfigure}
		\hfill
		\begin{subfigure}[b]{0.48\textwidth}
\begin{tikzpicture}
\begin{axis}[enlargelimits=false, axis on top, axis equal image,ticks=none,axis lines = middle, 
x label style={at={(axis description cs:1.01,0.0)},anchor=west},
y label style={xshift=-0.5cm},
ylabel={t}, xlabel=$\bT$, 
width=1.1\textwidth]
\addplot graphics [xmin=0,xmax=100,ymin=0,ymax=70] {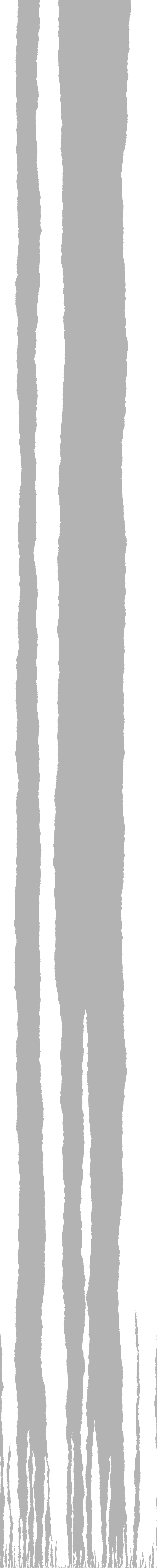};
\end{axis}
\end{tikzpicture}
	       \caption{ $\bfx_n^{(4)}=\frac1n\bZ+\{0,\frac1{4n}\}$}
    \end{subfigure}
 \caption{Simulations of aBMs {as interface process} with discrete starting configurations {on a torus $\bT$}.}
 \label{fig:simulations}
\end{figure}

\item 
As a final example we look at a sequence $\bfx_n\in\cD$ such that $\cI^{-1}(\bfx_n)$ does not converge in $\cV$: Let $x_n\in\R$ be a sequence converging monotone from below to some $a\in \bR$. We put $\bfx_n=\{x_1,\ldots,x_n\}$ and write $\cI^{-1}(\bfx_n)=[ u_n]$ with $u_n\in\cM_1^d(\R)$. Going from $\bfx_n$ to $\bfx_{n+1}$ adds the single point $x_{n+1}$, 
and we can choose the support of $u_{n+1}$ to remain fixed to the left of $x_{n+1}$, but such that it changes to the right of $x_{n+1}$. 
Then for any test function $\phi$ which is supported both to the left and to the right of $a$, the sequence $\langle u_n,\phi\rangle$ does not converge. 
However, if we add points in pairs, then the support of the induced measure remains unchanged except for the interval between the two added points, whose length goes to 0. 
Hence $\cI^{-1}(\bfx_{2n})$ and $\cI^{-1}(\bfx_{2n+1})$ converge to two distinct limit points. This is not surprising, since aBMs are parity preserving, and if we start with an even number eventually all will annihilate, while if we start with an odd number there will be a single surviving Brownian motion. 
However, if we extend the example to two sequences $x_n \uparrow a$ and $y_n \downarrow b$, $a<b$ and $\bfx_n=\{x_1,y_1,\ldots,x_n,y_n\}$, then the number of starting points is always even, but still $\cI^{-1}(\bfx_n)$ does not converge in $\cV$.
Note that we needed here that the sequence $x_n$ converges to a finite point $a\in \bR$. If $a=\infty$, the above argument does not work and in fact $\cI^{-1}(\bfx_n)$ does converge in $\cV$.
\end{itemize}
\end{example}

{
\subsection{Results on $n$-point densities}\label{sec:densities}
}
In this subsection, we turn to the $n$-particle density function for aBMs, which is defined as follows: 
If $\mu=(\mu_t)_{t>0}$ is an entrance law for the semigroup $(P_t)_{t\ge0}$, the corresponding $n$-point density is given by
\begin{align}\label{eq:density} 
p_{\mu}(t,\bfx)&:=
\lim_{\epsilon\to0}\frac{1}{(2\epsilon)^n}\,\bP_{\mu}\left(\bigcap_{i=1}^n\{\bfX_t\cap[x_i-\epsilon,x_i+\epsilon]\ne\emptyset\}\right),
\end{align}
for $\bfx=(x_1,\ldots,x_n)\in \bR^{n,\uparrow},\;t>0$.
See e.g.\ \cite[Appendix B]{MRTZ2006} for the existence of this density.

For $n=1$, the $1$-point density can be computed explicitly:
\begin{theorem}\label{thm:density}
Let $v=[u]\in\cV$ and consider the entrance law corresponding to $\nu:=\delta_{[u]}$ in view of Thm.\ \ref{thm:characterization_entrance_laws}. 
Then the $1$-particle density function is given by
\begin{align}\label{eq:1-point-density}
p_{[u]}(t,x) = \frac{1}{2\pi t^{2}}\, \int_{\bR^2}u(x+y_1)(1-u(x+y_2))|y_2-y_1|e^{-\frac{|\bfy|^2}{2t}}d\bfy,\qquad x\in \bR,\;t>0.
\end{align}
\end{theorem}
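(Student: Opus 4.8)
The plan is to combine the parity characterisation \eqref{eq:correspondence_0} of the entrance law with the classical reflection formula for a pair of coalescing Brownian motions. Throughout fix $t>0$ and $x\in\bR$, write $a=x-\eps$, $b=x+\eps$ (so $a<b$), and abbreviate the Gaussian density $g_t(z)=\tfrac{1}{\sqrt{2\pi t}}e^{-z^2/(2t)}$, so that the Brownian transition density is $p_t(w,z)=g_t(z-w)$. The first step is to replace the nonemptiness probability in \eqref{eq:density} (for $n=1$) by a parity probability. By the existence of the two-point density \cite[Appendix B]{MRTZ2006}, $\p_\mu(|\bfX_t\cap[a,b]|\ge2)\le\bE\big[\binom{|\bfX_t\cap[a,b]|}{2}\big]=\int_{a<\xi<\eta<b}p_\mu(t,\xi,\eta)\,d\xi\,d\eta=O(\eps^2)=o(\eps)$. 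Hence the events $\{\bfX_t\cap[a,b]\ne\emptyset\}$ and $\{|\bfX_t\cap[a,b]|\text{ is odd}\}$ differ by a set of probability $o(\eps)$, and taking complements in \eqref{eq:correspondence_0} gives
\[
p_{[u]}(t,x)=\lim_{\eps\to0}\frac{1}{2\eps}\,\p_\mu\big(|\bfX_t\cap[a,b]|\text{ is odd}\big)=\lim_{\eps\to0}\frac{1}{2\eps}\,\p\big(\chi_{Y_t^\ssup{a}}\ne\chi_{Y_t^\ssup{b}}\big),
\]
where $Y^\ssup{a},Y^\ssup{b}$ are coalescing Brownian motions started from $a$ and $b$ and the $\chi$'s are the Bernoulli variables of \eqref{defn:Bernoulli}.

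Next I would evaluate the dual probability by conditioning on the coalescing system. If the two motions have coalesced by time $t$ then $Y_t^\ssup{a}=Y_t^\ssup{b}$, the two Bernoulli variables coincide, and this event contributes nothing. On the complementary event the motions have run as independent Brownian motions preserving their order, and conditionally on their endpoints $\xi<\eta$ the variables $\chi_{Y_t^\ssup{a}},\chi_{Y_t^\ssup{b}}$ are independent $\mathrm{Ber}(u(\xi))$ and $\mathrm{Ber}(u(\eta))$. By the Karlin--McGregor reflection formula the sub-probability density of $(Y_t^\ssup{a},Y_t^\ssup{b})=(\xi,\eta)$ on the non-coalescence event, for $\xi<\eta$, equals $K_\eps(\xi,\eta):=p_t(a,\xi)p_t(b,\eta)-p_t(a,\eta)p_t(b,\xi)$, so
\[
\p\big(\chi_{Y_t^\ssup{a}}\ne\chi_{Y_t^\ssup{b}}\big)=\int_{\xi<\eta}K_\eps(\xi,\eta)\,\big[u(\xi)(1-u(\eta))+(1-u(\xi))u(\eta)\big]\,d\xi\,d\eta.
\]

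The final step is a first-order expansion of $K_\eps$ in $\eps$. Writing $f(z):=g_t(z-x)$ one has $p_t(a,\xi)=f(\xi+\eps)$, $p_t(b,\eta)=f(\eta-\eps)$, $p_t(a,\eta)=f(\eta+\eps)$, $p_t(b,\xi)=f(\xi-\eps)$, and Taylor expansion together with $f'(z)=-\tfrac{z-x}{t}f(z)$ gives
\[
K_\eps(\xi,\eta)=2\eps\big(f'(\xi)f(\eta)-f(\xi)f'(\eta)\big)+O(\eps^2)=2\eps\,\frac{\eta-\xi}{t}\,g_t(\xi-x)g_t(\eta-x)+O(\eps^2).
\]
Dividing by $2\eps$, passing to the limit (justified by dominated convergence, with the Gaussian tails furnishing an integrable bound uniform in small $\eps$), and substituting $\xi=x+y_1$, $\eta=x+y_2$ turns $\{\xi<\eta\}$ into $\{y_1<y_2\}$ and yields
\[
p_{[u]}(t,x)=\frac{1}{2\pi t^2}\int_{y_1<y_2}|y_2-y_1|\,e^{-|\bfy|^2/(2t)}\big[u(x+y_1)(1-u(x+y_2))+u(x+y_2)(1-u(x+y_1))\big]\,d\bfy.
\]
Symmetrising the integrand under $y_1\leftrightarrow y_2$, which leaves $|y_2-y_1|e^{-|\bfy|^2/(2t)}$ invariant, extends the domain to all of $\bR^2$ and collapses the bracket to the single term $u(x+y_1)(1-u(x+y_2))$, producing exactly \eqref{eq:1-point-density}.

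I expect the genuine obstacle to be the first reduction: one must control that the contribution of two or more particles in the shrinking window is truly $o(\eps)$, so that the \emph{exact} parity identity \eqref{eq:correspondence_0} may legitimately stand in for the nonemptiness probability defining the density. This is precisely where the existence (and the vanishing on the diagonal) of the two-point density is used. The reflection computation and the $\eps$-expansion are then routine, modulo the dominated-convergence justification for interchanging the limit with the integral.
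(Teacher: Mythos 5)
Your proof is correct and reaches the paper's formula, but it executes two key sub-steps by different means. The paper first proves the general reduction $p_{[u]}(t,x)=\lim_{\eps\to0}\tfrac{1}{2\eps}\p\big(\chi_{Y_t^\ssup{x-\eps}}\ne\chi_{Y_t^\ssup{x+\eps}}\big)$ as Proposition \ref{prop:density-n}, using the dual Brownian web: two interface points in $[x-\eps,x+\eps]$ force three backward coalescing paths to avoid one another, which is shown to be negligible relative to two paths avoiding. You instead justify the passage from the nonemptiness probability to the parity probability by the second factorial moment bound $\p(|\bfX_t\cap[a,b]|\ge2)\le\E\big[\tbinom{|\bfX_t\cap[a,b]|}{2}\big]=O(\eps^2)$; this is legitimate, but note that it silently identifies the density of \eqref{eq:density} (a limit of window probabilities) with the second factorial moment density of the point process, an identification that needs the regularity from \cite[Appendix B]{MRTZ2006} and is exactly what the paper's Brownian-web argument circumvents. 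For the computation itself, the paper conditions on $\{\tau^\ssup{x,\eps}>t\}$, quotes the explicit density \eqref{eq:2-non-col-BM} of two non-colliding Brownian motions started from a common point, and multiplies by the reflection-principle asymptotic $\p(\tau^\ssup{x,\eps}>t)=\tfrac{2\eps}{\sqrt{\pi t}}+o(\eps)$; you instead write the sub-probability density of the non-coalesced pair as the Karlin--McGregor determinant $K_\eps$ and Taylor-expand to first order in $\eps$. Your determinant expansion packages the two factors of the paper's argument into one step and is arguably cleaner, at the cost of having to justify the dominated-convergence interchange for $\tfrac{1}{2\eps}K_\eps$ (routine, as you say, via a mean-value bound on the Gaussian difference). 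The resulting integral over $\{y_1<y_2\}$ and the symmetrization to all of $\bR^2$ agree exactly with the paper's final lines.
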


\begin{rem}
Observe that for the class of \emph{homogeneous} entrance laws parametrized by $[\lambda]\in\cV$, $\lambda\in[0,\frac12]$, the expression \eqref{eq:1-point-density} for the one-point density simplifies to
\[ p_{[\lambda]}(t,x) = \frac{2\lambda(1-\lambda)}{\sqrt{\pi t}}. \]

In particular, for the `maximal' entrance law for aBMs considered in \cite{TZ11}, corresponding to $\lambda=\frac{1}{2}$, we have
\[p_{[\frac12]}(t,x) = \frac{1}{2\sqrt{\pi t}}, \]
which indeed clearly maximizes the one-point density among all homogeneous entrance laws. 
Note that (as is to be expected from the thinning relation) this is half the density under the maximal entrance law for cBMs, compare \cite[Prop.\ 2.7]{SSS17}. 

However, non-homogeneous entrance laws can achieve bigger densities. For example, 
{
if we choose $u:=\ind_{\R^-}$, then the entrance law 
$\delta_{[\ind_{\R^-}]}$
}
corresponds to a single 
Brownian motion starting at the origin, for which we have
\[ p_{[\ind_{\R^-}]}(t,x)=\frac{1}{\sqrt{2\pi t}}e^{-\frac{x^2}{2t}}, \]
and in particular 
\[ p_{[\ind_{\R^-}]}(t,0)= \frac{1}{\sqrt{2\pi t}}>\frac{1}{2\sqrt{\pi t}}=p_{[\frac12]}(t,0). \]
This phenomenon is not limited to entrance laws which do not start densely: For $\epsilon\in(0,\frac12)$, consider the entrance law { corresponding to $\delta_{[u]}$ with
\[ u:=\epsilon+(1-2\epsilon)\ind_{\R^-}. \]
}
Here $\cI([u])=\bR$, but $u(x)\to\ind_{\R^-}$ uniformly as $\epsilon\to0$, and by \eqref{eq:1-point-density} $p_{\bullet}(t,x)$ is continuous in the uniform topology, so that $p_{[u]}(t,0)>p_{[\frac12]}(t,0)$ for $\epsilon$ small enough. 
We conclude that a more appropriate name for
the entrance law corresponding to $[\frac12]\in\cV$ and discussed in \cite{TZ11} would be `maximal homogeneous'.
\end{rem}

Turning to the case $n\ge2$, we have the following result:

{
\begin{prop}\label{prop:density-n}
Let $v=[u]\in\cV$ and consider the entrance law corresponding to $\nu:=\delta_{[u]}$ in view of Thm.\ \ref{thm:characterization_entrance_laws}. 
Then for each $n\in\N$, $\bfx=(x_1,\ldots,x_n)\in\R^{n,\uparrow}$ and $t>0$, we have 
\begin{align}\label{eq:n-point-density}
p_{[u]}(t,\bfx) 
&= \lim_{\eps\downarrow0} \frac{1}{(2\eps)^{n}}\,\p\bigg(\bigcap_{i=1}^n \{\chi_{Y^\ssup{x_i-\epsilon}_t}\ne \chi_{Y^\ssup{x_i+\epsilon}_t} \}\bigg),
\end{align}
where 
$(\bfY_t^{(x_1-\epsilon,\,x_1+\epsilon,\ldots,x_n-\epsilon,\,x_n+\epsilon)})_{t\ge0}$ is a system of cBMs 
and 
the Bernoulli random variables $\chi_{Y_t^\ssup{x_i\pm\epsilon}}$ are as in \eqref{defn:Bernoulli}.
\end{prop}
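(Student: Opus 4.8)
The plan is to feed the exact duality identity \eqref{eq:correspondence_0} through an inclusion--exclusion argument, and then to argue that, to leading order in $\eps$, the event that a small interval is nonempty may be replaced by the event that it contains an \emph{odd} number of particles, which is exactly the event that duality controls.

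First I would fix $t>0$ and $\bfx\in\bR^{n,\uparrow}$, and for $\eps>0$ small enough that the intervals $[x_i-\eps,x_i+\eps]$ are disjoint and increasingly ordered, write $a_i:=x_i-\eps$ and $b_i:=x_i+\eps$. For any subset $S\subseteq\{1,\dots,n\}$ the $2|S|$ endpoints $\{a_i,b_i:i\in S\}$ are increasingly ordered with consecutive pairing $(a_i,b_i)$, so \eqref{eq:correspondence_0} applies and gives
\[\p_\mu\Big(\bigcap_{i\in S}\{|\bfX_t\cap[a_i,b_i]|\text{ is even}\}\Big)=\p\Big(\bigcap_{i\in S}\big\{\chi_{Y_t^\ssup{a_i}}=\chi_{Y_t^\ssup{b_i}}\big\}\Big).\]
By the consistency of coalescing systems under restriction of the initial condition, all the events on the right can be realized on the single system $(\bfY_t^{(a_1,b_1,\dots,a_n,b_n)})_{t\ge0}$ from the statement, so that every subset probability refers to the same probability space. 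Abbreviating $E_i:=\{|\bfX_t\cap[a_i,b_i]|\text{ even}\}$ and $F_i:=\{\chi_{Y_t^\ssup{a_i}}=\chi_{Y_t^\ssup{b_i}}\}$, the displayed identity reads $\p_\mu(\bigcap_{i\in S}E_i)=\p(\bigcap_{i\in S}F_i)$ for every $S$ (both sides being $1$ when $S=\emptyset$).

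Next I would expand $\prod_{i=1}^n(1-\ind_{E_i})$ and take expectations under $\p_\mu$, and similarly expand $\prod_{i=1}^n(1-\ind_{F_i})$ under $\p$; since the two families of subset probabilities agree term by term, the standard inclusion--exclusion identity yields the exact relation
\[\p_\mu\Big(\bigcap_{i=1}^n\{|\bfX_t\cap[a_i,b_i]|\text{ is odd}\}\Big)=\p\Big(\bigcap_{i=1}^n\big\{\chi_{Y_t^\ssup{a_i}}\ne\chi_{Y_t^\ssup{b_i}}\big\}\Big).\]
It then remains to pass from the odd-occupancy event to the nonemptiness event of the definition \eqref{eq:density}. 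For each $i$ one has $\{\bfX_t\cap[a_i,b_i]\ne\emptyset\}\supseteq\{|\bfX_t\cap[a_i,b_i]|\text{ is odd}\}$, the two events differing only on configurations carrying at least two particles in $[a_i,b_i]$. Bounding such multiple-occupancy events by integrals of the higher-order densities (which exist and are locally bounded, cf.\ \cite[Appendix B]{MRTZ2006}), the probability of finding $\ge2$ particles in an interval of length $2\eps$ is $O(\eps^2)$, and for any nonempty $S$ the joint event of $\ge2$ particles in each interval of $S$ and $\ge1$ in the remaining ones is $O(\eps^{n+|S|})=O(\eps^{n+1})$. Hence
\[\p_\mu\Big(\bigcap_{i=1}^n\{\bfX_t\cap[a_i,b_i]\ne\emptyset\}\Big)=\p_\mu\Big(\bigcap_{i=1}^n\{|\bfX_t\cap[a_i,b_i]|\text{ is odd}\}\Big)+O(\eps^{n+1}).\]
Dividing by $(2\eps)^n$ and letting $\eps\downarrow0$ makes the error term disappear: the left-hand side converges to $p_{[u]}(t,\bfx)$ by \eqref{eq:density}, hence so does the odd-occupancy quantity, and by the exact relation above so does the Bernoulli-disagreement probability, which is precisely \eqref{eq:n-point-density}.

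I expect the main obstacle to be the multiple-occupancy estimate: at order $\eps^n$ one must rule out configurations in which a single interval $[a_i,b_i]$ carries two or more particles, which is exactly what separates ``at least one particle'' from the parity that duality controls. This should follow from local boundedness of the $k$-point densities together with the strong repulsion between annihilating particles (the two-point density vanishing on the diagonal, which in fact only improves the bound), but making the estimate uniform in $\eps$ and simultaneously over all $n$ intervals is the delicate point. By contrast, the duality input and the inclusion--exclusion step are exact and purely algebraic.
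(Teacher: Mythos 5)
Your proposal is correct, and it shares the paper's overall skeleton (replace ``the interval is nonempty'' by ``the interval contains an odd number of particles'', then feed that to the duality \eqref{eq:correspondence_0}), but the key step is justified by a genuinely different mechanism. The paper realizes $\bfX_t$ as $\cI(u_t)$ for the voter model and uses the dual Brownian web: two interface points in $[x-\eps,x+\eps]$ force three backward coalescing paths started in that interval to stay apart over time $t$, which decays faster than the probability that the two endpoint paths stay apart, giving $\p(\ge 2\text{ interfaces}\mid \ge 1)\to 0$; it then only writes out $n=1$ and asserts the general case, and treats the passage from the even-event identity to the odd-event identity as immediate. You instead control multiple occupancy by a second-factorial-moment bound, $\p(\ge 2\text{ points in }I)\le\frac12\int_{I^2}\rho_2=O(|I|^2)$, and you make the inclusion--exclusion over subsets $S$ explicit; this handles all $n$ uniformly and avoids the web entirely. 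The one input you owe is the local boundedness of the $k$-point correlation functions of $\bfX_t$ under an \emph{arbitrary} entrance law --- the paper cites \cite[Appendix B]{MRTZ2006} only for existence of the limit \eqref{eq:density}, not for this. It is true, but you should say why: writing $\mu_t=\mu_{t/2}P_{t/2}$, the thinning relation embeds $\bfX_t$ into a coalescing system run for time $t/2$, which by monotonicity is dominated by cBMs under their maximal entrance law, whose correlation functions are explicitly locally bounded; this yields the uniform $O(\eps^{n+1})$ error you need. With that supplied, your argument is complete and arguably more self-contained on the point-process side than the paper's, at the cost of not exploiting the graphical construction that the paper has already set up.
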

}

{
\begin{rem}\label{rem:not_explicit} 
\begin{itemize}
\item[a)]
For $n=1$, the event on the RHS of \eqref{eq:n-point-density} simplifies to 
\[\{\tau^\ssup{x,\epsilon}>t\}\cap\{\chi_{Y_t^\ssup{x-\epsilon}}\ne\chi_{Y_t^\ssup{x+\epsilon}}\},\]
where $\tau^\ssup{x,\epsilon}$ is the coalescence time of the two Brownian motions. 
The $1$-point density \eqref{eq:1-point-density} can then be obtained by using the distribution of two Brownian motions conditioned not to collide up to time $t$, see the proof of Thm.\ \ref{thm:density}.
 \item[b)]
Note that for a homogeneous entrance law $v=[\lambda]$ with $\lambda\in(0,1)$ constant, the RHS of \eqref{eq:n-point-density} can be simplified: 
For $\epsilon$ small enough, successive terms in the intersection are either independent or share a Bernoulli random variable via coalesced Brownian motions. 
Partitioning the index set $\{1,\ldots,n\}$ into $K$ blocks $\{i_k,\ldots,i_{k+1}-1\}$ (with $i_1:=1$ and $i_{K+1}:=n+1$) based on the coalescence structure, so that different blocks are independent, \eqref{eq:n-point-density} simplifies to
\begin{align}\label{eq:n-point-density-2}\bal
p_{[\lambda]}(t,\bfx) &= \lim_{\eps\downarrow0} \frac{1}{(2\eps)^{n}}\,\E\left[\ind_{D_t^\ssup\epsilon}\,\prod_{k=1}^K (\lambda(1-\lambda))^{\lfloor\tfrac{1+i_{k+1}-i_{k}}2\rfloor}(1+\ind_{\{i_{k+1}-i_{k} \text{ is odd}\}})\right]\\
&=
\lim_{\eps\downarrow0} \frac{1}{(2\eps)^{n}}\,\E\left[\ind_{D_t^\ssup\epsilon}\, (\lambda(1-\lambda))^{\tfrac{K_{odd}+n}2}2^{K_{odd}}\right],
\eal\end{align}
where $D_t^\ssup\epsilon:=\bigcap_{i=1}^n \{ Y_t^\ssup{x_i-\epsilon} \neq Y_t^\ssup{x_i+\epsilon} \}$ and $K_{odd}:=\sum_{k=1}^K \ind_{\{i_{k+1}-i_{k} \text{ is odd}\}}$.
In particular, for the `maximal' entrance law $\lambda=\frac{1}{2}$ we have
\begin{align}
p_{[\frac12]}(t,\bfx)=&\lim_{\eps\downarrow0} \frac{1}{(4\eps)^{n}}\,\bP\left(D_t^\ssup\epsilon\right).
\end{align}
We see again that \eqref{eq:n-point-density-2} becomes maximal for $\lambda=\frac{1}{2}$, thus the $n$-point density function is maximized by $\lambda=\frac{1}{2}$ in the class of homogeneous entrance laws, for any $n\in\N$.
\end{itemize}
\end{rem}

The expression \eqref{eq:n-point-density} for the $n$-point density function is deceivingly short.
In fact, as we have just seen in Remark \ref{rem:not_explicit},
it 
involves a lot of combinatorial effort to disentangle the effect of the various ways the Brownian motions can have coalesced.
}
However, we can give a more tractable representation of a `thinned' version of the $n$-point density as follows:
Fix $\bfx\in\cD$. The discrete closed set $\bfx$ can be partitioned into two disjoint subsets $\bfx=\bfx^\ssup{1}\cup\bfx^\ssup{2}$ so that points in $\bfx$ are alternating between $\bfx^\ssup{1}$ and $\bfx^\ssup{2}$ and such that either $\sup(\bfx) \in \bfx^\ssup{1}$ or otherwise $\inf(\bfx\cap [0,\infty)) = \inf(\bfx^\ssup{1} \cap[0,\infty))$. Denote by $\bfx^{thin}$ the random subset of $\bfx$ equalling either $\bfx^\ssup{1}$ or $\bfx^\ssup{2}$ with probability $\frac12$. 

Now for an entrance law $\mu=(\mu_t)_{t>0}$, we define the thinned $n$-point density as
\begin{align}
p_\mu^{thin}(t,\bfx):= \lim_{\epsilon\to0}\frac{1}{(2\epsilon)^n}\,\bP_{\mu}\left(\bigcap_{i=1}^n\{\bfX^{thin}_t\cap[x_i-\epsilon,x_i+\epsilon]\ne\emptyset\}\right),\qquad \bfx\in \bR^{n,\uparrow},\;t>0,
\end{align}
where $\bfX_t^{thin}$ is the random subset of $\bfX_t$ obtained via thinning as defined above.

{
\begin{prop}\label{thm:n-point-density-thinned}
Let $v=[u]\in\cV$ and consider the entrance law corresponding to $\nu:=\delta_{[u]}$ in view of Thm.\ \ref{thm:characterization_entrance_laws}. 
Then 
for $\bfx\in\bR^{n,\uparrow}$ and $t>0$, we have
\begin{align}\label{eq:n-density}
&p_{[u]}^{thin}(t,\bfx) \\
&= \frac{q(t,\bfx)}{2} \,\lim_{\epsilon\downarrow0}\E\bigg[\prod_{k=1}^n u( B^\ssup{x_k-\epsilon}_t)(1-u(B_t^\ssup{x_k+\epsilon})) + \prod_{k=1}^n (1-u(B_t^\ssup{x_k-\epsilon}))u(B^\ssup{x_k+\epsilon}_t) \,\bigg|\, \tau^\ssup{\bfx,\epsilon}>t\bigg],
\end{align}
where $B$ is a Brownian motion in $\R^{2n}$ starting from and indexed by $(x_1-\epsilon,x_1+\epsilon,\ldots,x_n-\epsilon,x_n+\epsilon)$, 
\[\tau^\ssup{\bfx,\epsilon}:=\inf\{t>0\,|\,B_t^\ssup{y}=B_t^\ssup{z}\text{ for some }y\neq z\}\]
is the first collision time of any pair of coordinates, and
\begin{align}\label{eq:q_t}
q(t,\bfx):=\lim_{\epsilon\to0}\frac{1}{(2\epsilon)^n}\,\bP_{}(\tau^\ssup{\bfx,\epsilon}>t).
\end{align}
\end{prop}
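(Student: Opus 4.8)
The plan is to reduce the thinned density to a quantity about the dual voter field and then exploit the order-preservation of coalescing Brownian motions. Throughout I would fix a representative $u$ of $v=[u]$ and work with the associated continuous-space voter field $(U_t)_{t>0}$, so that $\bfX_t=\cI(U_t)$ under $\p_\mu$; by \eqref{eq:concentration} we may take $U_t$ to be $\{0,1\}$-valued with discrete interface for every $t>0$. The interface points then split into \emph{ascending} interfaces (where $U_t$ jumps $0\to1$) and \emph{descending} interfaces (where it jumps $1\to0$), and these two classes are exactly the two alternating subsets of $\bfx=\bfX_t$ occurring in the definition of the thinning. Since $\bfX_t^{thin}$ equals one of the two classes chosen with probability $\tfrac12$ each by an independent fair coin, conditioning on $\bfX_t$ gives, for every $\epsilon>0$,
\[
\p_\mu\Big(\bigcap_{i=1}^n\{\bfX_t^{thin}\cap[x_i-\epsilon,x_i+\epsilon]\ne\emptyset\}\Big)=\tfrac12\,\p_v(A_\epsilon^{\uparrow})+\tfrac12\,\p_v(A_\epsilon^{\downarrow}),
\]
where $A_\epsilon^{\uparrow}$ (resp.\ $A_\epsilon^{\downarrow}$) is the event that each interval $[x_i-\epsilon,x_i+\epsilon]$ contains an ascending (resp.\ descending) interface.

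Next I would show that, up to an error of order $o(\epsilon^n)$,
\[
\p_v(A_\epsilon^{\uparrow})=\p_v\Big(\bigcap_{i=1}^n\{U_t(x_i-\epsilon)=0,\ U_t(x_i+\epsilon)=1\}\Big)+o(\epsilon^n),
\]
and similarly for $A_\epsilon^{\downarrow}$ with the roles of $0$ and $1$ interchanged. The two events differ only where some interval contains at least two interface points; since the aBM two-point density is finite this has probability $O(\epsilon^2)$ for a fixed interval, hence $O(\epsilon^{n+1})$ once every interval must be occupied, which is negligible after dividing by $(2\epsilon)^n$. It is essential to keep the ascending and descending patterns separate rather than passing to the representative-invariant event $\bigcap_i\{U_t(x_i-\epsilon)\ne U_t(x_i+\epsilon)\}$: the latter also contains the \emph{mixed} sign patterns, and a short collision count (for a mixed pattern the $n$ internal pairs are still forbidden, each contributing $O(\epsilon)$) shows these still contribute at order $\epsilon^n$, whereas the thinning correctly excludes them.

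The heart of the argument is the evaluation of $\p_v\big(\bigcap_i\{U_t(x_i-\epsilon)=0,U_t(x_i+\epsilon)=1\}\big)$ via the coalescing-dual representation of the voter field, which gives the joint law $\cL\big((U_t(z_j))_{j}\,\big|\,\p_v\big)=\cL\big((\chi_{Y_t^\ssup{z_j}})_{j}\big)$ for $(z_j)=(x_1-\epsilon,x_1+\epsilon,\ldots,x_n-\epsilon,x_n+\epsilon)$, with the Bernoulli labels as in \eqref{defn:Bernoulli} (shared within a coalesced group). For $\epsilon$ small the points order as $x_1-\epsilon<x_1+\epsilon<\cdots<x_n-\epsilon<x_n+\epsilon$, so the requested labels $0,1,0,1,\ldots$ alternate and \emph{every} adjacent pair carries opposite values. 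Because coalescing Brownian motions preserve order and only adjacent particles can meet, any coalescence would merge two particles forced to receive different $\chi$-values, which is impossible; hence the event forces $\{\tau^\ssup{\bfx,\epsilon}>t\}$, on which the cBMs coincide with the independent motions $B$ with all labels independent. This yields the exact identity
\[
\p_v\Big(\bigcap_{i=1}^n\{U_t(x_i-\epsilon)=0,U_t(x_i+\epsilon)=1\}\Big)=\E\Big[\ind_{\{\tau^\ssup{\bfx,\epsilon}>t\}}\prod_{k=1}^n\big(1-u(B_t^\ssup{x_k-\epsilon})\big)\,u(B_t^\ssup{x_k+\epsilon})\Big],
\]
and the descending event gives the same expression with $u$ and $1-u$ swapped.

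Finally I would divide by $(2\epsilon)^n$ and let $\epsilon\downarrow0$. Writing the right-hand side as $\p(\tau^\ssup{\bfx,\epsilon}>t)$ times the conditional expectation given $\{\tau^\ssup{\bfx,\epsilon}>t\}$, the prefactor $(2\epsilon)^{-n}\p(\tau^\ssup{\bfx,\epsilon}>t)$ converges to $q(t,\bfx)$ by \eqref{eq:q_t}; combining the ascending and descending contributions with their weights $\tfrac12$ produces exactly \eqref{eq:n-density}. The remaining analytic point is the existence of the conditional limit, for which I would use the explicit Karlin--McGregor density of the $2n$ ordered Brownian motions conditioned not to collide up to time $t$ together with dominated convergence (recall $0\le u\le1$), which simultaneously re-proves existence of $p^{thin}_{[u]}(t,\bfx)$. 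I expect the main obstacle to lie in the first two steps---matching the global random choice in the thinning to the uniform ascending/descending voter patterns and controlling the multiple-interface corrections at order $o(\epsilon^n)$---while the order-preservation collapse to $\{\tau^\ssup{\bfx,\epsilon}>t\}$ is the clean structural mechanism responsible for the short product formula.
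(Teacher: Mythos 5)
Your proposal follows essentially the same route as the paper's proof: identifying the two thinned subsets with the ascending/descending interface classes of the voter field, reducing to the endpoint sign patterns $\{U_t(x_i-\epsilon)=0,\,U_t(x_i+\epsilon)=1\}$ (resp.\ the swapped pattern) via the single-interface control, and then using the coalescing dual together with order preservation to force $\{\tau^\ssup{\bfx,\epsilon}>t\}$ and obtain the product formula. The argument is correct, and your explicit remark on why mixed sign patterns must be excluded, plus the Karlin--McGregor justification of the conditional limit, are harmless refinements of what the paper does.
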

}

\begin{rem} We recall that~\cite{TZ11} show that the
$n$-point densities for aBMs started in the `maximal homogeneous' entrance law  are given in terms of Pfaffians. 
It would be interesting to make the connection to our formulae, which however 
does not seem to be completely straight-forward, see also Remark~\ref{rem:not_explicit}.
\end{rem}

\section{The continuous-space  voter model}\label{sec:cSSM}

The proof of Theorem \ref{thm:characterization_entrance_laws} (the characterization of entrance laws for annihilating Brownian motions) in Section \ref{sec:proofs} below relies on a close connection of aBMs to what we call the \emph{continuous-space voter model}.
This section is devoted to a survey explaining this connection, which is also of independent interest. We will not give proofs but refer to the existing literature, commenting on necessary modifications when appropriate.

{
We start by recalling the classical (nearest-neighbor) voter model on $\bZ^d$: This is a Markov process $(\eta_t)_{t\ge0}$ taking values in $\{0,1\}^{\bZ^d}$ 
such that 
 \begin{equation}\label{eq:voter-discrete}
 \eta(x)\text{ flips to }1-\eta(x) \text{ at rate }\frac{1}{2d}\sum_{y:|y-x|=1}\ind_{\{\eta(y)\ne \eta(x)\}},\qquad x\in\Z^d.
 \end{equation}
As is well known, 
it is characterized by the following \emph{moment duality:} 
For all $\eta\in\{0,1\}^{\bZ^d}$ and finite subsets $A\subset\bZ^d$, we have
\begin{equation}\label{eq:moment-duality-discrete}
\bE_{\eta}\Bigg[\prod_{x\in A}\eta_t(x)\Bigg]=\bE_{A}\Bigg[\prod_{x\in A_t} \eta(x)\Bigg],\qquad t\ge0,
\end{equation}
where $(A_t)_{t\ge0}$ denotes a (set-valued) system of (instantaneously) coalescing nearest-neighbor random walks starting from $A$.
See \cite{Liggett85} for this duality and for further background on the discrete voter model.

In view of this, a continuous-space analogue of the voter model should be a Markov process $(u_t)_{t\ge0}$ with $u_t(x)\in\{0,1\}$ for all $x\in\R$ which is characterized by a similar moment duality, but where the coalescing random walks are replaced by coalescing Brownian motions. 
Indeed, such a model was first introduced by \cite{Evans1997} in a much more general context and further discussed in \cite{Donnellyetal2000} and \cite{Zhou2003}, 
where it is however called a \emph{continuum-sites stepping-stone model}. 
The following result is essentially contained as a special case (only two types, Brownian migration on $\R$) in \cite[Thm.\ 4.1, Prop.\ 5.1]{Evans1997} and \cite[Cor.\ 7.3]{Donnellyetal2000}:
\begin{theorem}[\cite{Evans1997, Donnellyetal2000}]
\label{thm:voter} There exists a unique Feller semigroup
$(Q_t)_{t\ge0}$ on $\cM_1(\R)$ such that the corresponding Feller process $(u_t)_{t\ge0}$ is characterized by the following moment duality: 
For all $u\in\cM_1(\R)$ and $n\in\N$, 
we have for Lebesgue-almost all $\bfx=(x_1,\ldots,x_n)\in\R^n$
\begin{equation}\label{eq:duality_pointwise}
\E_{u}\bigg[\prod_{i=1}^nu_t(x_i)\bigg]
 =\E\bigg[\prod_{y\in\bfY^\bfx_t}u(y)\bigg],
\qquad t\ge0,
\end{equation}
where $\bfY^\bfx$ 
denotes a system of coalescing Brownian motions starting from $\bfx$. 
For each initial condition $u\in\cM_1(\R)$, the process $(u_t)_{t\ge0}$ has continuous sample paths, and for each fixed $t>0$ we have, almost surely under $\p_u$, 
\begin{equation}\label{eq:sep_types}
 u_t(x)\in\{0,1\}\qquad\text{for Lebesgue-almost all }
 x\in\bR.
 \end{equation}
\end{theorem}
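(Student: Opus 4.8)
The plan is to construct the semigroup directly from its dual object, the coalescing Brownian flow, and to read off both the moment duality \eqref{eq:duality_pointwise} and the separation-of-types property \eqref{eq:sep_types} from the coalescing structure. Concretely, I would realize a single coalescing Brownian flow $(Y_t^\ssup{x})_{x\in\R,\,t\ge0}$ (the Arratia flow) and, given an initial density $u\in\cM_1(\R)$, define $u_t(x)$ as the type of the time-$t$ ancestor of $x$: attach to each surviving particle $y$ in the (locally finite) range $\{Y_t^\ssup{x}:x\in\R\}$ an independent Bernoulli$(u(y))$ variable $\chi_y$ as in \eqref{defn:Bernoulli}, and set $u_t(x):=\chi_{Y_t^\ssup{x}}$. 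Since sites that have coalesced by time $t$ share the same ancestor and hence the same type, this field is manifestly $\{0,1\}$-valued and locally constant off the discrete set of interfaces, which is the content of \eqref{eq:sep_types}. Taking conditional expectations over the $\chi$'s and using $\E[\chi_y]=u(y)$ together with the idempotency $\chi_y^2=\chi_y$ gives, for finite $\bfx=(x_1,\ldots,x_n)$,
\begin{align*}
\E_u\Big[\prod_{i=1}^n u_t(x_i)\,\Big|\,\bfY^\bfx\Big]=\prod_{y\in\bfY_t^\bfx}u(y),
\end{align*}
and integrating out the flow yields exactly \eqref{eq:duality_pointwise}. The semigroup property is then inherited from the Markov property of the coalescing flow (concatenation of flows over $[0,s]$ and $[s,s+t]$), and I would define $Q_t$ as the law of $u_t$ on $\cM_1(\R)$.

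For uniqueness I would argue that the duality already determines the law of $u_t$, without invoking separation of types. For $\phi_1,\ldots,\phi_n\in\cC_c(\R)$,
\begin{align*}
\E_u\Big[\prod_{i=1}^n\langle u_t,\phi_i\rangle\Big]=\int_{\R^n}\E_u\Big[\prod_{i=1}^n u_t(x_i)\Big]\prod_{i=1}^n\phi_i(x_i)\,d\bfx,
\end{align*}
and since the diagonal carries no Lebesgue mass the integrand is prescribed for a.e.\ $\bfx$ by the right-hand side of \eqref{eq:duality_pointwise}. As $\cM_1(\R)$ is compact in the vague topology and the functionals $u\mapsto\prod_i\langle u,\phi_i\rangle$, together with their linear combinations, form a point-separating subalgebra, they are dense in $\cC(\cM_1(\R))$ by Stone--Weierstrass, so their expectations determine the law of $u_t$. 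Hence any two Feller processes obeying \eqref{eq:duality_pointwise} have the same one-dimensional distributions from every starting point, and the Markov property upgrades this to equality of the semigroups. (Given \eqref{eq:sep_types}, these laws admit an even more explicit description: since $u_t(x)\in\{0,1\}$ a.e., every mixed moment collapses to a product of first powers, so the correlation functions on the right of \eqref{eq:duality_pointwise} fix the joint law of $(u_t(x_1),\ldots,u_t(x_n))$ on $\{0,1\}^n$ by inclusion--exclusion.)

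The step I expect to be the main obstacle is making the flow construction rigorous where it relies on coalescing Brownian motions coming down from infinity: to define $u_t$ as a genuine element of $\cM_1(\R)$ I need that, started from all of $\R$, the flow leaves only a locally finite range $\{Y_t^\ssup{x}:x\in\R\}$ at each $t>0$, so that the ancestral partition of $\R$ has locally finitely many blocks and the Bernoulli assignment is well posed. This is a known but nontrivial property of the Arratia flow and must be imported carefully. A cleaner, essentially self-contained route to \eqref{eq:sep_types} that bypasses the flow is the two-point computation
\begin{align*}
\E_u[u_t(x)^2]
&=\lim_{x'\to x}\E_u[u_t(x)u_t(x')]
=\lim_{x'\to x}\E\Big[\prod_{y\in\bfY_t^{(x,x')}}u(y)\Big]\\
&=\E\big[u(Y_t^\ssup{x})\big]=\E_u[u_t(x)],
\end{align*}
where the third equality uses that two coalescing Brownian motions started at $x$ and $x'$ coalesce before time $t$ with probability tending to $1$ as $x'\to x$. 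This gives $\E_u[u_t(x)(1-u_t(x))]=0$, hence $u_t(x)\in\{0,1\}$ a.s.\ for a.e.\ $x$, and \eqref{eq:sep_types} follows by Fubini. Justifying the limit interchange (continuity of the two-point function and the decay of the coalescence time as the initial gap shrinks) is where the real work lies.

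Finally I would verify the Feller property and path continuity. Continuity of $u\mapsto Q_tf(u)$ on the measure-determining class above and strong continuity $Q_tf\to f$ as $t\downarrow0$ follow from continuity of the right-hand side of \eqref{eq:duality_pointwise} in $u$ (vague topology) and in $t$, using dominated convergence and $\bfY_s^\bfx\to\bfx$ as $s\downarrow0$. Continuous sample paths I would obtain from Kolmogorov's criterion applied to the real-valued processes $t\mapsto\langle u_t,\phi\rangle$, estimating increments via the duality; alternatively, all of these properties may simply be quoted from \cite{Evans1997, Donnellyetal2000}, on whose construction this argument is modelled.
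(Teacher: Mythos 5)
First, a point of reference: the paper does not prove Theorem~\ref{thm:voter} at all --- it is imported from \cite{Evans1997, Donnellyetal2000}, and Section~\ref{sec:cSSM} only \emph{sketches} a graphical construction via the dual Brownian web, together with the ``duality determines the law'' argument that reappears (for the functions $F_\Phi$) in the proof of Theorem~\ref{thm:characterization_entrance_laws}a). Your proposal is modelled on exactly that sketch: the Bernoulli decoration of a coalescing system as in \eqref{defn:Bernoulli}, the conditional-expectation computation giving \eqref{eq:duality_pointwise}, and the uniqueness argument via compactness of $\cM_1(\R)$, Stone--Weierstrass on the algebra generated by $u\mapsto\langle u,\phi\rangle$, and Fubini to reduce to Lebesgue-a.e.\ $\bfx$. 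In outline this is the paper's (and its sources') route, and the uniqueness part is correct as written.

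There are, however, two concrete gaps in the existence part. (1) Your pathwise definition $u_t(x):=\chi_{Y_t^\ssup{x}}$ uses the \emph{forward} flow, with an independent Bernoulli family attached to the time-$t$ range for each $t$. For a fixed $t$ this produces the correct marginal $Q_t(u;\cdot)$ (by the distributional self-duality of coalescing Brownian motions), but the families $(\chi_y)$ for different times are unrelated, so no process $(u_t)_{t\ge0}$ is actually defined; consequently the Markov property, Chapman--Kolmogorov, and path continuity are \emph{not} ``inherited from the flow'' as claimed --- Chapman--Kolmogorov has to be verified separately through the duality (using that $\bfY_s^\bfx$ has an absolutely continuous law off the diagonal, so the a.e.-in-$\bfx$ identity can be iterated), and the pathwise statements need either the citation or the backward construction $u_t(x):=\chi_{\widehat\cW_0^\ssup{t,x}}$ of \eqref{eq:graphical-construction}, in which a single realization of the dual web and one auxiliary i.i.d.\ sequence define $u_t$ for \emph{all} $t$ consistently. (2) Attaching independent Bernoulli variables to a random countable index set must be done measurably; the paper's footnote points out that the naive uncountable family $(\chi_x)_{x\in\R}$ fails, and resolves this by enumerating the countable set of ancestral positions and using i.i.d.\ uniforms --- your locally finite range needs the same device. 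Your two-point route to \eqref{eq:sep_types} is the standard one, but, as you acknowledge, the diagonal moment $\E_u[u_t(x)^2]$ is not directly covered by \eqref{eq:duality_pointwise}; the clean version computes $\E\!\int\phi(x)\,u_t(x)(1-u_t(x))\,dx$ by Lebesgue differentiation and Fubini, and that interchange is the step that must actually be carried out. None of this is fatal, but as written your argument establishes the kernel $Q_t$ and the duality, while the Feller property, the process-level Markov property and path continuity still rest entirely on the cited literature.
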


In view of the `separation of types'-property \eqref{eq:sep_types} as well as the analogous form of the moment dualities \eqref{eq:moment-duality-discrete} and \eqref{eq:duality_pointwise}, we prefer 
to call the Feller process $(u_t)_{t\ge0}$ from Theorem \ref{thm:voter} the \emph{continuous-space voter model}, and will denote it by $\mathrm{CSVM}$ in the following. 
If we refer to a particular initial condition $u\in\cM_1(\R)$, we write $\mathrm{CSVM}_u$.
Note that \eqref{eq:duality_pointwise} implies that the model is symmetric under exchange of $u$ and $1-u$, in the sense that
 \begin{equation}\label{eq:symmetric}
 \cL\left((1-u_t)_{t\ge0})\,|\,\p_{u}\right)=\cL\left((u_t)_{t\ge0})\,|\,\p_{1-u}\right),\qquad u\in\cM_1(\R).
 \end{equation}
The qualification 
`Lebesgue-almost all' in Theorem \ref{thm:voter} is necessary 
since the process $(u_t)_{t\ge0}$ is \emph{measure-valued} (recall that on the state space $\cM_1(\R)$, we use the topology of vague convergence). 
However, we will see below that a version of the densities $u_t$ can be chosen such that the moment duality \eqref{eq:duality_pointwise} holds 
for all $\bfx\in\R^n$, even in a pathwise sense and not only in expectation (see \eqref{eq:duality_pathwise}).
In particular, 
$u_t(x)$ is a Bernoulli random variable with parameter $\E_x[u(B_t)]$ for each fixed $t>0$ and $x\in\R$, where $B$ is a standard Brownian motion.
Moreover, we will see that \eqref{eq:sep_types} can be strengthened to a much stronger clustering property, see Thm.\ \ref{thm:interfaces}.

There are several possible constructions for CSVM. In \cite{Evans1997}, Evans 
constructed the model directly from (a weak form of) the moment duality \eqref{eq:duality_pointwise}, even for much more general particle motions than Brownian motion on $\R$, 
and for an \emph{uncountable} type space instead of the two-type case we consider here. 
In several later papers, CSVM was shown to arise as the limit of various other discrete- or continuous-space models. 
It was first observed in \cite[p.\ 794]{AS11} (although without a formal proof) that the discrete one-dimensional voter model converges to $\mathrm{CSVM}$ under diffusive space/time-rescaling.
Later, CSVM was obtained as the scaling limit of rescaled spatial $\Lambda$-Fleming-Viot processes on $\R$ (see \cite[Thm.\ 1.1]{BEV13}), 
of rescaled interacting Fleming-Viot processes on $\Z$
under diffusive space/time-rescaling (see \cite[Thms.\ 1.31, 1.32]{GSW16}), 
and of continuous-space \emph{stepping stone models}
\begin{align}\label{eq:SSM}
 \tfrac{\partial}{\partial t} u^\ssup{\gamma}_t(x) = \tfrac{1}{2}\Delta u^\ssup{\gamma}_t(x)  + \sqrt{\gamma u^\ssup{\gamma}_t(x)(1-u^\ssup{\gamma}_t(x)) }\, \dot{W}_t(x),\qquad x\in\R
\end{align}
as $\gamma\to\infty$ (see \cite[Thm.\ 2.8a)]{HOV15}). 
The latter result makes sense in view of the fact that as first observed in \cite{Shiga86}, 
the stepping stone model \eqref{eq:SSM} satisfies an analogous moment duality as in \eqref{eq:duality_pointwise}, but where the dual is a system 
of \emph{delayed} cBMs, where
two Brownian motions coalesce when their intersection local time exceeds an independent exponential random variable with parameter $\gamma$,
and which clearly converges to a system of instantaneously coalescing Brownian motions as $\gamma\to\infty$.

The dynamics of the discrete voter model is specified explicitly by its flip rates \eqref{eq:voter-discrete}. In contrast, in \cite{Evans1997} the 
CSVM-process $(u_t)_{t\ge0}$ was specified only indirectly via the moment duality \eqref{eq:duality_pointwise}.
In analogy with the graphical representation of the discrete model (see e.g.\ \cite[Sec.\ III.6]{Liggett85}), it is possible to give a more explicit \emph{graphical} or \emph{genealogical} construction of CSVM,
as has been first observed in \cite{AS11} and then carried out rigorously in \cite{GSW16}. This however requires use of the (dual) \emph{Brownian web}, a highly non-trivial object. 
We will briefly explain this graphical construction at the end of the present section.

An alternative `explicit' construction of CSVM, which also provides the link to annihilating Brownian motions, is possible in terms of \emph{interfaces}.
Indeed, for the one-dimensional discrete model, 
it is quite easy to see (e.g.\ by the graphical representation) 
and was first observed in \cite{S78} that the dynamics of the discrete interface
\[I_t:=\{x\in\Z\,|\, \eta_t(x)\ne \eta_t(x+1)\}\]
follows an \emph{annihilating random walk}, and this provides an equivalent description of the voter model on $\Z$.
It was shown in \cite{HOV15} that the analogous assertion holds in continuous space also. 
In other words, the interface model of CSVM is given by aBMs.
But in continuous space, this is much more subtle since there we may start from initial conditions whose interface does not consist of well-separated points.
In fact, even for an arbitrary initial condition $u\in\cM_1(\R)$, the process `locally comes down from infinity' immediately in the sense that almost surely, the set $\cI(u_t)$ is discrete for each $t>0$, and the movement of the interface points for positive times is described in law by a system of aBMs. 
}

A mathematically precise formulation is as follows: For $u\in\cM_1^d(\R)$, consider a system of aBMs $(\bfX_t)_{t\ge0}$ started from the discrete closed set $\cI(u)\in\cD$.
The system $(\bfX_t)_{t\ge0}$ induces a (random) partition of $[0,\infty)\times\bR$, whose components are bounded by the closure 
\[\cJ := {\rm cl}  \{ (t,\bfX_t)\, | \, t \in [0,\infty) \} \]
of the graphs of the annihilating paths. 
The path properties of the annihilating system ensure that the components of this partition can be assigned the value $0$ or $1$ in an alternating fashion, i.e.\ so that neighboring components have always different values.
That is, we define a random mapping 
\begin{equation}\label{eq:hat m}\hat m:\cJ^c\to\{0,1\}\end{equation}
with the property that it is locally constant and alternating on (each component of) $\cJ^c$.
See Figure~\ref{fig:colouring-1} for an illustration. 

 \begin{figure}
\begin{center}
 \includegraphics{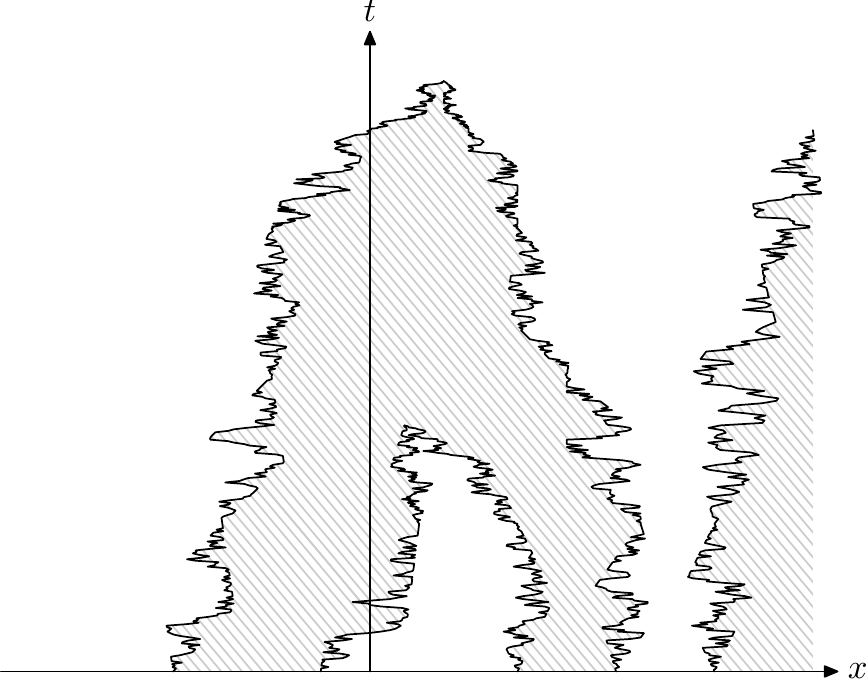}
 \caption{
 An illustration of the construction of the process $(\hat U_t)_{t\ge0}\in\calC_{[0,\infty)}(\cM^d_1(\R))$ from an initial configuration $u$ with five interfaces and a system of aBMs starting from $\cI(u)$.
 The density $\hat U_t(x)$ is equal to $1$ for all $(t,x)$ in the shaded area and is zero otherwise.}
  \label{fig:colouring-1}
\end{center} \end{figure}

Of course, given the aBM path $\bfX$ there are exactly two possibilities to choose the mapping $\hat m$ on $\cJ^c$. 
However, it is determined by the choice of $\hat m(0,\cdot)$ at time zero.
This enables us to define an $\cM_1^d(\R)$-valued process $(\hat U_t)_{t\ge0}$ as follows: Given 
$u\in \cM_1^d(\R)$, 
let
\[\hat m(0,x):=\ind_{\supp(u)}(x),\qquad x\in\R,\] 
then extend $\hat m(0, \cdot)$ to $[0,\infty)\times\R$ by the requirement that it is constant on the components of $\cJ^c$ as described above, and set 
\begin{equation}\label{eq:defn_U_hat}\hat U_t(x):=\ind_{\hat m(t,x)=1},\qquad x\in\R,\; t\geq 0;\end{equation}
again see Figure~\ref{fig:colouring-1}. 
By the definition of the topology on $\cM_1^d(\R)$, it is clear that the process $(\hat U_t)_{t\ge0}$
is a random element of $\cC_{[0,\infty)}(\cM_1^d(\bR))$.

Then the following theorem is contained as a special case 
in \cite[Thms. 2.12, 2.14]{HOV15}:

\begin{theorem}[\cite{HOV15}]\label{thm:interfaces}
Let $(u_t)_{t\ge0}$ be the $\mathrm{CSVM}_u$-process from Theorem \ref{thm:voter}.
\begin{itemize}
\item[a)] 
Suppose $u\in\cM_1^d(\R)$, i.e.\ $\cI(u)\in \cD$. 
If we let the process $(\hat U_t)_{t\ge0}$ be defined as in \eqref{eq:defn_U_hat} above, then
\begin{equation*}
(u_t)_{t\ge0}\overset{d}=(\hat U_t)_{t\ge0}\quad\text{on }\cC_{[0,\infty)}(\cM_1(\bR)).
\end{equation*}
\item[b)] Let $u\in\cM_1(\R)$. 
Then, almost surely, we have $\cI(u_{t})\in \cD$ for all $t>0$. 
Moreover, for any $t_0>0$, the evolution of $(u_t)_{t\ge t_0}$ is given (in law) as in a) when started in $u_{t_0}$.
\end{itemize}\end{theorem}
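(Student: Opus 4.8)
The plan is to prove part a) by invoking the characterization of CSVM as the \emph{unique} Feller process obeying the moment duality \eqref{eq:duality_pointwise} (Theorem \ref{thm:voter}): I would show that the interface process $(\hat U_t)_{t\ge0}$ of \eqref{eq:defn_U_hat} is itself a Markov process satisfying the same duality, and then read off equality of laws. First I would verify that $(\hat U_t)$ has continuous paths in $\cM_1(\R)$ and is Markov. Continuity in the vague topology holds because the interface points move continuously (as Brownian motions) and an annihilation event alters the density only on the shrinking interval between the two colliding interfaces; the time-homogeneous strong Markov property of the aBM system $\bfX$, together with the fact that $\hat U_t$ is a \emph{local} functional of $\bfX_{[0,t]}$ and the initial colouring, yields the Markov property of $(\hat U_t)$. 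Since the mixed moments $\E[\prod_i u_t(x_i)]$ are exactly the correlation functions of the $[0,1]$-valued field $u_t$ and therefore determine its law as a random element of the compact space $\cM_1(\R)$, matching the duality together with the Markov property forces the finite-dimensional distributions — hence the laws on $\cC_{[0,\infty)}(\cM_1(\R))$ — to coincide.

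The heart of the argument, and what I expect to be the main obstacle, is verifying $\E\big[\prod_{i=1}^n \hat U_t(x_i)\big]=\E\big[\prod_{y\in\bfY_t^\bfx} u(y)\big]$ for the interface process. Choosing the $\{0,1\}$-valued version of $u$, the right-hand side equals $\p\big(u(y)=1 \text{ for all } y\in\bfY_t^\bfx\big)$, so I must express the colour $\hat U_t(x_i)$ through a \emph{backward genealogy}. The clean route is to realise the forward annihilating interfaces and a backward coalescing system on a common probability space — via the (dual) Brownian web \cite{SSS17} — so that the two families of paths do not cross. Non-crossing guarantees that the backward Brownian genealogy started at $(t,x_i)$ stays inside the connected component of $\cJ^c$ containing $(t,x_i)$ until it reaches time $0$ at an ancestor $\xi_i$, whence $\hat U_t(x_i)=u(\xi_i)$ because the colouring $\hat m$ is constant on that component. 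The ancestors $(\xi_i)_{i=1}^n$ form a coalescing Brownian system run for time $t$, so their joint law equals that of $\bfY_t^\bfx$, and the two probabilities agree. An alternative, more hands-on derivation would approximate CSVM by the rescaled discrete voter model, whose interface is exactly an annihilating random walk \cite{S78}, and pass this discrete interface identity to the diffusive limit; that avoids the web but requires a separate convergence statement for the annihilating interfaces.

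For part b) I would first establish the \emph{instantaneous regularisation} $\cI(u_t)\in\cD$ for all $t>0$, a.s., even from a non-discrete interface. The natural tool is a first-moment (intensity) bound: using the duality with coalescing Brownian motions, which come down from infinity, I would show that the expected number of interface points of $u_t$ in any bounded interval is finite for every $t>0$, and combine this with the clustering inherent in the CSVM to conclude local finiteness of $\cI(u_t)$; density of $\cM_1^d(\R)$ in $\cM_1(\R)$ (Lemma \ref{lem:dense}) together with Feller continuity of CSVM lets one approximate a general $u\in\cM_1(\R)$ by discrete-interface initial data. The final `restart' statement is then immediate from the Markov property: conditionally on $u_{t_0}\in\cM_1^d(\R)$ (which holds a.s. by the first part), the evolution of $(u_t)_{t\ge t_0}$ is governed by the CSVM semigroup started from $u_{t_0}$, and part a) identifies this with the interface process driven by aBMs started from $\cI(u_{t_0})$.

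The two genuinely hard points are thus the backward-genealogy/non-crossing identity underlying the moment duality for $(\hat U_t)$ in part a), and the quantitative `local coming down from infinity' of the interface density in part b); the remaining steps (continuity, the Markov property, and the moment-determinacy of the marginals) are comparatively routine once these are in place.
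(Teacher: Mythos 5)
Your proposal is correct and takes essentially the same route as the paper: the paper formally defers this theorem to \cite{HOV15}, but the argument it sketches at the end of Section 3 is precisely your double-Brownian-web/non-crossing coupling --- backward genealogies confined to connected components of $\cJ^c$ give $\hat U_t(x)=u(\xi)$ and hence the moment duality for part a), and local finiteness at time $0$ of the backward coalescing paths gives $\cI(u_t)\in\cD$ for part b). The only difference is one of packaging and direction: you start from the aBM-driven colouring, verify that it satisfies the characterizing duality, and invoke uniqueness, whereas the paper constructs $u_t$ directly from the dual web and reads off that its interface points move as annihilating Brownian motions.
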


We note that \cite[Thm.\ 10.2]{Donnellyetal2000} contains a somewhat analogous result for the corresponding continuous-space voter model with Brownian migration on the \emph{torus}. 
(In that case, by compactness, the system comes down to \emph{finitely} many interfaces immediately.)
We also remark that \cite{Zhou2003} studies clustering behavior for the model with Brownian migration on the real line as in our case, but with infinitely many types as in \cite{Evans1997}. 
In particular, \cite[Thm.\ 3.7]{Zhou2003} (when restricted to the two-types case) shows essentially that under homogeneous initial conditions $u_0\equiv u\in(0,1)$, for each \emph{fixed} $t>0$, the interface $\cI(u_t)$ is discrete almost surely.
This is not strong enough to give rise to an `interface process' as in Thm.\ \ref{thm:interfaces}.

\begin{rem}
\begin{enumerate}
\item Theorem \ref{thm:interfaces} provides the crucial link between annihilating Brownian motions and the continuous-space voter model. 
In particular, we will see in the proof that for the process $(V_t)_{t \geq 0}$ from Theorem~\ref{thm:characterization_entrance_laws}, if $V_0 = v = [u]$, then we have $(V_t)_{t\ge0} \stackrel{d}{=} ([u_t])_{t\ge0}$.
\item 
Write $(Q_t)_{t\ge0}$ for the semigroup of $(u_t)_{t\ge0}$ as in Thm.\ \ref{thm:voter}. Then Theorem \ref{thm:interfaces}b) implies in particular that 
\begin{equation}\label{eq:clustering} 
\text{for all }u\in\cM_1(\R)\text{ and }t>0:\; Q_t(u;\cdot)\text{ is concentrated on }\cM_1^d(\R). 
\end{equation}
\end{enumerate}
\end{rem}

{We conclude this section with an outline of the graphical construction of CSVM in terms of the (dual) Brownian web, first conceived in \cite{AS11} and later elaborated in \cite{GSW16}.
For the analogous graphical representation of the discrete voter model, see e.g.\ \cite[Sec.\ III.6]{Liggett85}.
Our exposition will be non-technical; for background and technical details concerning the Brownian web, in particular the precise state space and topology involved, we refer to \cite{SSS17}.
Actually, we will use the 
\emph{double Brownian web} $(\cW,\widehat\cW)$, which is a coupled construction of the Brownian web and its dual on the same probability space.
That is, $\cW=\{\cW^\ssup{t,x}\}_{(t,x)\in\R^2}$ resp. $\widehat\cW=\{\widehat\cW^\ssup{t,x}\}_{(t,x)\in\R^2}$ is a (random) collection of paths 
indexed by the starting position $(t,x)$ and evolving forwards resp. backwards in time as coalescing Brownian motions,
and such that almost surely, no path in $\cW$ crosses any path in $\widehat\cW$. 
More precisely, for each $(t,x)\in\R^2$, almost surely there is a unique path $\cW^\ssup{t,x}$ starting from $(t,x)$ and not crossing any path in $\widehat\cW$, 
and for any finite collection $(t_1,x_1),\ldots,(t_n,x_n)$ of starting points, $(\cW^\ssup{t_1,x_1},\ldots, \cW^\ssup{t_n,x_n})$ is distributed as a system of $n$ coalescing Brownian motions,
and analogously for $\widehat\cW$.

Now the graphical construction of CSVM works as follows: 
Suppose first that $u\in\cM_1^d(\R)$, so that we may assume $u(x)\in\{0,1\}$ for all $x\in\R$.
Then we set
\begin{equation}
u_t(x):=u(\widehat\cW_0^\ssup{t,x})\in\{0,1\},\qquad t\ge0,\; x\in\R.
\end{equation}
This gives a coupled definition of the Bernoulli random variables $u_t(x)$ for all $(t,x)$ and has an interpretation in terms of \emph{genealogies}: 
namely, the type of an `individual' at time $t>0$ and site $x\in\R$ is determined by tracing back its genealogy along the backwards coalescing path $\widehat \cW^\ssup{t,x}$ starting from $(t,x)$ down until time zero. 
(Note the analogy with the graphical representation of the discrete voter model.)
For a general initial condition $u\in\cM_1(\R)$, in order to define $u_t(x)$, one traces back the genealogy in the same way and then in the last step, conditionally on the realization of $\widehat\cW$, one samples a type in $\{0,1\}$ according to a Bernoulli distribution with parameter $u(\widehat\cW_0^\ssup{t,x})$.
Some care is needed to ensure that the resulting construction of $u_t(\cdot)$ is measurable in $x$.\footnote{In fact, one is tempted to just take a family $(\chi_x)_{x\in\R}$ of independent Bernoulli random variables with parameter $u(x)$, respectively, and to put $u_t(x):=\chi_{\widehat\cW_0^\ssup{t,x}}$.
This however does not work since $x\mapsto \chi_x$ is not measurable.}
Observing that the set
\[\widehat E:=\left\{\widehat\cW_0^\ssup{t,x}\,|\,t>0,\,x\in\R\right\}\]
is countable almost surely (see \cite[p.\ 15]{GSW16}),
we let $(U_n)_{n\in\N}$ be a sequence of i.i.d.\ uniform random variables on $[0,1]$, independent of $\widehat\cW$, and given a realization of $\widehat\cW$, we let $J:\widehat E\to\N$ be an enumeration of the elements of $\widehat E$. 
Then we put for $t>0$ and $x\in\R$
\begin{equation}\label{eq:graphical-construction}
 u_t(x):=\chi_{\widehat\cW_0^\ssup{t,x}}:=\begin{cases}1&\text{if }U_{J(\widehat\cW_0^\ssup{t,x})}\le u(\widehat\cW_0^\ssup{t,x}),\\0&\text{else}.\end{cases}
\end{equation}

It is easy to check that conditionally on $\widehat\cW$, the random variables 
$\chi_{\widehat\cW_0^\ssup{t,x}}$ (as a family indexed by the elements of $\widehat E$)
are independent and Bernoulli distributed with parameter $u(\widehat\cW_0^\ssup{t,x})$, 
and that $u_t(\cdot)$ is measurable in $x$, hence yields a random element of $\cM_1(\R)$.

In order to check that the process $(u_t)_{t\ge0}$ defined above agrees with the continuous-space voter model, note first that \eqref{eq:graphical-construction} implies the moment duality \eqref{eq:duality_pointwise} in a pathwise sense, i.e. 
\begin{equation}\label{eq:duality_pathwise}
 \prod_{i=1}^n u_t(x_i)=\prod_{i=1}^n \chi_{\widehat\cW_0^\ssup{t,x_i}}, \qquad t>0,\;\bfx=(x_1,\ldots,x_n)\in\R^n.
 \end{equation}
(In the terminology of \cite{JK14}, we have a strong pathwise duality.)
Taking expectations, we obtain immediately the moment duality \eqref{eq:duality_pointwise}, since 
$\{\widehat\cW_0^\ssup{t,x_1},\ldots,\widehat\cW_0^\ssup{t,x_n}\}\overset{d}{=}\bfY^\bfx_t$ for a system $(\bfY^\bfx)_{t\ge0}$ of (forward) cBMs starting from $\bfx$. 
Because the moments determine the distribution, this shows that the one-dimensional marginals of $(u_t)_{t\ge0}$ defined in \eqref{eq:graphical-construction} agree with those of CSVM.
In order to conclude that the processes agree in distribution on path space, one needs to check that $(u_t)_{t\ge0}$ is a Markov process and has continuous paths; 
for a proof of this in a more general (infinitely-many-types) context, see \cite[Thm.\ 1.27]{GSW16}.

This pathwise `graphical' construction of CSVM allows one to quickly see the assertions of Thm.\ \ref{thm:interfaces}. 
For example, to see that the system locally comes down from infinity immediately, we fix $t>0$, $x\in\R$ and define
\[J_t(x):=\{y\in\R \,|\, \widehat\cW_0^\ssup{t,y} = \widehat\cW_0^\ssup{t,x}\}.\]
The 
properties of $\widehat\cW$ imply that $J_t(x)$ is a non-degenerate interval 
for each $x\in\R$, and by \eqref{eq:graphical-construction} we have $u_t(y)=u_t(x)$ for all $y\in J_t(x)$.
Thus considered as a measure, $u_t$ is 
supported on a countable collection of disjoint intervals. Moreover, the boundaries of these intervals cannot accumulate, since otherwise there would exist a finite interval containing at time $0$ infinitely many endpoints of backward coalescing paths starting at time $t>0$, 
contradicting the fact that these cBMs form a locally finite system at time $0$. In other words, we must have $u_t\in\cM_1^d(\R)$, which is Thm.\ \ref{thm:interfaces}b).
Note that 
so far we used only the dual (`backward') Brownian web $\widehat\cW$. 
However, by the coupling with the `forward' Brownian web $\cW$ it is quite easy to see the assertion of Thm.\ \ref{thm:interfaces}a) that starting from $u\in\cM_1^d(\R)$, interface points move as annihilating Brownian motions.
For example, suppose that $u$ is supported on countably many disjoint compact intervals, 
i.e.\ 
$u=\sum_{k\in\Z}\ind_{[a_k,b_k]}$ with $a_k<b_k<a_{k+1}$ for all $k\in\Z$. 
Then by \eqref{eq:graphical-construction} we have $u_t(x)
=\sum_{k=1}^n \ind_{J_t^\ssup{k}}(x)$, 
where $J_t^\ssup{k}:=\left\{x\in\R\,|\,\widehat\cW_0^\ssup{t,x}\in[a_k,b_k]\right\}$. 
By the properties of $\widehat\cW$, it is clear that each $J_t^\ssup{k}$ is a finite non-degenerate interval $J_t^\ssup{k}=[a_k(t),b_k(t)]$ provided it is not empty. 
We claim that for those indices $k$ such that $J_t^\ssup{k}\ne\emptyset$, we have
\[a_k(t)=\cW_t^\ssup{0,a_k}\qquad\text{and}\qquad b_k(t)=\cW_t^\ssup{0,b_k}.\]
Indeed, this follows from the fact that no path in $\cW$ can cross any path in $\widehat\cW$. 
This shows that (locally) interface points move as Brownian motions as long as $J_t^\ssup{k}\ne\emptyset$.
The coalescence time $\tau$ of the forward paths $\cW^\ssup{0,a_k}$ and $\cW^\ssup{0,b_k}$ is precisely the time from which on no backward path in $\widehat\cW$ can end up in the interval $[a_k,b_k]$ at time zero, and so $J_t^\ssup{k}=\emptyset$ for $t\ge\tau$,
which means that the interfaces corresponding to the $k$-th interval annihilate at time $\tau$. 
Obviously, analogous arguments work also for all other kinds of initial conditions in $\cM_1^d(\R)$, for example when there are only finitely many interfaces and/or when the support of $u$ is unbounded in one or both directions.

We thus see that the graphical construction of CSVM sketched above allows for simple and elegant arguments in situations where the proofs would be more involved if one had to use only the moment duality \eqref{eq:duality_pointwise}. 
On the other hand, it relies on properties of the (dual) Brownian web, a highly non-trivial object. 
We will use the pathwise duality \eqref{eq:duality_pathwise} in some of the proofs in Section \ref{sec:proofs} below, but emphasize that all results in this paper can also be proved without recourse to the Brownian web, using essentially only \eqref{eq:duality_pointwise}. 
Moreover, the moment duality technique is robust to some degree and can be generalized to situations where an analogous web construction does not (yet) exist. 
(Recall that \cite{Evans1997} allowed for much more general migration mechanisms than Brownian motion on $\R$, and \cite{HOV15} considered interface points which move as aBMs with a highly non-regular drift.) 
}

\section{Proofs of results
}\label{sec:proofs}
In this section, we prove our results stated above in Section~\ref{sec:classification}. 

\begin{proof}[Proof of Thm.\ \ref{thm:characterization_entrance_laws}]
For the proof of part a), recall that $(P_t)_{t\ge0}$ denotes the semigroup of annihilating Brownian motions on $\cD$, and that the semigroup $(T_t)_{t\ge0}$ on $\cV^d$ is defined as the image of $(P_t)_{t\ge0}$ under the inverse interface operator $\cI^{-1}:\cD\to\cV^d$, see \eqref{eq:defn_Q}. 
The latter can be rewritten as
\begin{equation}\label{identity_semigroups}
T_t(v;f\circ\cI)=P_t(\cI(v);f),\qquad v\in\cV^d,\;f\in\cC_b(\cD). 
\end{equation}

On the other hand, recall that $(Q_t)_{t\ge0}$ denotes the Feller semigroup on $\cM_1(\R)$ corresponding to the continuous-space voter process $(u_t)_{t\ge0}$ from Thm.\ \ref{thm:voter}.  
Via the canonical quotient mapping $q:\cM_1(\R)\to\cV$, $(Q_t)_{t\ge0}$ factorizes to a Feller semigroup
\begin{equation}\label{qe:defn_Q}
\hat T_t(v;\cdot):=Q_t(u;\cdot)\circ q^{-1},\qquad v=[u]\in\cV,\;t\ge0
\end{equation}
on the quotient space $\cV$, and the corresponding Feller process 
\[(V_t)_{t\ge0}:=([u_t])_{t\ge0}\]
inherits the path continuity from $(u_t)_{t\ge0}$.
Of course, for \eqref{qe:defn_Q} to make sense we need to check in particular that the definition does not depend on the choice  of the representative $u$ or $1-u$ of the equivalence class $[u]$. 
But this (as well as the Feller property) 
follows easily from the symmetry \eqref{eq:symmetric}.
The property \eqref{eq:concentration} of the semigroup $(\hat T_t)_{t\ge0}$ follows immediately from the corresponding clustering property \eqref{eq:clustering} of $(Q_t)_{t\ge0}$.

In order to see that $(\hat T_t)_{t\ge0}$ as defined in \eqref{qe:defn_Q} is indeed an extension of $(T_t)_{t\ge0}$ as defined in \eqref{eq:defn_Q}, 
observe that for any $v\in\cV^d$ we have by Theorem \ref{thm:interfaces}a) that
\begin{equation}\label{representation_aBM}
\cL\big((\bfX_t)_{t\ge0}\,\big|\,\p_{\cI(v)}\big)=\cL\big((\cI(V_t))_{t\ge0}\,\big|\, \p_{v}\big)\quad\text{on }\cC_{[0,\infty)}(\cD).
\end{equation}
But together with \eqref{identity_semigroups}, this implies 
\begin{align}\bal
 T_t(v;f)&=P_t\big(\cI(v);f\circ\cI^{-1}\big)=\E_{\cI(v)}\left[f\circ\cI^{-1}(\bfX_t)\right]=\E_v[f(V_t)]
 =\hat T_t(v;f)
\eal\end{align}
for each $v\in\cV^d$, $f\in\cC_b(\cV^d)$ and $t\ge0$.

{
The `moment duality' formula \eqref{eq:duality_3} follows directly from the graphical construction \eqref{eq:graphical-construction} of the CSVM-process $(u_t)_{t\ge0}$,
since $(\widehat\cW_0^\ssup{t,x_{1}},\ldots,\widehat\cW_0^\ssup{t,x_{2n}})\overset{d}=(Y_t^\ssup{x_1},\ldots,Y_t^\ssup{x_{2n}})$ for a system $(\bfY^\bfx)_{t\ge0}$ of (forward) cBMs starting from $\bfx$.
It remains to show that this duality characterizes the law of $V_t$ on $\cV$ for fixed $t>0$. To this end, we argue as follows:
For $u\in\cM_1(\R)$ and $x,y\in\R$, define
\begin{align}\label{eq:definition-h}
h_u(x,y)\equiv h_{1-u}(x,y):=u(x)(1-u(y))+(1-u(x))u(y). 
\end{align}
Note that $h_u(\cdot,\cdot)$ depends only on the equivalence class $[u]\in\cV$, and that for $u\in\cM_1^d(\R)$ we may assume $u(\cdot)\in\{0,1\}$ and thus
\begin{align}\label{eq:equality-h}
h_u(x,y)=\ind_{\{u(x)\ne u(y)\}}\qquad\text{for } u\in\cM_1^d(\R)\text{ and Lebesgue-almost all }x,y\in\R.
\end{align}

Now given $n\in\N$ and $\phi_1,\ldots,\phi_{2n}\in\cC_c(\R)$, 
we define $\Phi:=\phi_1\otimes\cdots\otimes\phi_{2n}\in\cC_c(\R^{2n})$ and a function $F_{\Phi}:\cV\to\R$ by
\begin{align*}
F_{\Phi}(v)&:=\int_{\R^{2n}}\Phi(\bfx)\prod_{i=1}^n h_u(x_{2i-1},x_{2i})\,d\bfx\\
&=\prod_{i=1}^n\big(\langle u,\phi_{2i-1}\rangle\langle 1-u,\phi_{2i}\rangle+\langle u,\phi_{2i}\rangle\langle 1-u,\phi_{2i-1}\rangle\big),\qquad v=[u]\in\cV. 
\end{align*}
Note that $F$ is well-defined and continuous on $\cV$, and the family of functions
\[\cF:=\left\{F_{\Phi}(\cdot)\,|\,n\in\N,\phi_i\in\cC(\R)\text{ for } i=1,\ldots,2n\right\}\subseteq\cC(\cV)\]
is closed under multiplication and separates the points of $\cV$. 
Therefore 
$\cF$ is separating for probability laws on $\cV$. 
Since $V_t\in\cV^d$ for $t>0$ by \eqref{eq:concentration}, using \eqref{eq:equality-h} we obtain
\begin{align}
\E_v\left[F_{\Phi}(V_t)\right]
&=\int_{\R^{2n}}\Phi(\bfx)\,\p_v\left(\bigcap_{i=1}^n\big\{U_t(x_{2i-1})\ne U_t(x_{2i})\big\}\right) d\bfx,
\end{align}
showing that \eqref{eq:duality_3} determines the law of $V_t$ on $\cV$.
}
Thus part a) of Thm.\ \ref{thm:characterization_entrance_laws} is proved.

For part b), let $\nu$ be any probability measure on $\cV$ and define $\mu_t$ by \eqref{eq:correspondence}. 
Then by \eqref{identity_semigroups} we have for any $0<s<t$ and $f\in\cC_b(\cD)$ that
\[\bal\mu_t(f)&=\nu\hat T_t(f\circ \cI)=\int_{\cV^d}\hat T_{t-s}(\cdot;f\circ\cI)\, d(\nu\hat T_s)\\
&=\int_{\cV^d}P_{t-s}(\cI(\cdot);f)\,d(\nu\hat T_s)=\int_\cD P_{t-s}(\cdot,f)\,d\mu_s,\eal\]
showing that $\mu_t=\mu_sP_{t-s}$ and $(\mu_t)_{t>0}$ is an entrance law for the semigroup $(P_t)_{t\ge0}$ of aBMs.
Conversely, suppose that $(\mu_t)_{t>0}$ is any such entrance law. 
Then a similar calculation shows that $\nu_t:=\mu_t\circ \cI$ defines an entrance law for the semigroup $(\hat T_t)_{t\ge0}$ on $\cV$. 
Since the latter is a Feller semigroup on a compact space, the entrance law $(\nu_t)_{t>0}$ is closable, i.e.\ there exists a probability measure $\nu$ on $\cV$ such that $\nu_t=\nu\hat T_t$ for all $t>0$. 
In fact, let $\cP(\cV)$ denote the space of all probability measures on $\cV$ endowed with the topology of weak convergence. 
Then $\cP(\cV)$ is itself compact, and thus there exists a sequence $t_n\downarrow0$ and $\nu\in\cP(\cV)$ such that $\nu_{t_n}\to\nu$ weakly as $n\to\infty$. 
Then for any $f\in\cC_b(\cV)$, $t>0$ and $n\in\N$ large enough we have by the Feller property that
\[\nu_t(f)=\nu_{t_n}\hat T_{t-t_n}(f)\to \nu\hat T_t(f),\]
i.e.\ $\nu_t=\nu\hat T_t$, and we conclude that $\mu_t=\nu_t\circ\cI^{-1}=\nu\hat T_t\circ\cI^{-1}$. 
Moreover, the moment duality~\eqref{eq:duality_3} implies that different probability measures $\nu\ne\tilde\nu$ on $\cV$ lead to different laws
\begin{equation}\label{eq:unique}\cL\big(V_t\,|\,\p_{\nu}\big)\ne \cL\big(V_t\,|\,\p_{\tilde\nu}\big)\qquad \text{for }t>0. \end{equation}
Thus the mapping defined by \eqref{eq:correspondence} is indeed a bijection, and 
{
the claimed correspondence 
}
is established.
{
For $\nu=\delta_v$ with $v=[u]\in\cV$ and the corresponding entrance law $\mu$, \eqref{eq:correspondence} implies
\begin{align*}
\p_{\mu}\left(\bigcap_{i=1}^n\{|\bfX_t\cap[x_{2i-1},x_{2i}]|\text{ is even}\}\right)&=\p_{v}\left(\bigcap_{i=1}^n\{|\cI(V_t)\cap[x_{2i-1},x_{2i}]|\text{ is even}\}\right)
\end{align*}
for each $\bfx\in\R^{2n,\uparrow}$ and $t>0$. 
Again writing $V_t=[U_t]=\{U_t,1-U_t\}$, we
observe that on $\{\cI(V_t)\cap\bfx=\emptyset\}$ (an event of full probability), we have that
$|\cI(V_t)\cap[x_{2i-1},x_{2i}]|$ is even iff $U_t(x_{2i-1})=U_t(x_{2i})$, for all $i=1,\ldots,n$. 
Thus by \eqref{eq:duality_3}, the previous display equals
\begin{align*}
\p_v\left(\bigcap_{i=1}^n\{U_t(x_{2i-1}) = U_t(x_{2i})\}\right)=\p\left(\bigcap_{i=1}^n\big\{\chi_{Y_t^\ssup{x_{2i-1}}} = \chi_{Y_t^\ssup{x_{2i}}}\big\}\right),
\end{align*}
establishing formula \eqref{eq:correspondence_0}. 
}
\end{proof}

\begin{proof}[Proof of Thm.\ \ref{thm:convergence}]
Suppose that $(\mu^\ssup{n})_{n\in\N}$ is a sequence of probability measures on $\cD$ such that $\nu^\ssup{n}:=\mu^\ssup{n}\circ\cI$ converges weakly to some probability measure $\nu$ on $\cV$. 
Then by the Feller property of $(V_t)_{t\ge0}$, we have $\cL\big((V_t)_{t\ge0}\,\big|\, \p_{\nu^\ssup{n}}\big)\to\cL\big((V_t)_{t\ge0}\,\big|\, \p_{\nu}\big)$ weakly on $\cC_{[0,\infty)}(\cV)$, 
thus also 
\begin{equation}\label{eq:proof_convergence}
\cL\big((V_t)_{t>0}\,\big|\, \p_{\nu^\ssup{n}}\big)\to\cL\big((V_t)_{t>0}\,\big|\, \p_{\nu}\big) \quad\text{weakly on }\cC_{(0,\infty)}(\cV^d).\end{equation}
By the continuous mapping theorem and \eqref{representation_aBM}, it follows that
\[\cL\big((\bfX_t)_{t>0}\,\big|\,\p_{\mu^\ssup{n}}\big)=\cL\big((\cI(V_t))_{t>0}\,\big|\,\p_{\nu^\ssup{n}}\big)\to\cL\big((\cI(V_t))_{t>0}\,\big|\, \p_{\nu}\big)\quad\text{on }\cC_{(0,\infty)}(\cD),\]
i.e.\ \eqref{eq:convergence}. 
Conversely, suppose that $\cL\big((\bfX_t)_{t>0}\,\big|\,\p_{\mu^\ssup{n}}\big)=\cL\big((\cI(V_t))_{t>0}\,\big|\, \p_{\mu^\ssup{n}\circ\cI}\big)$ converges weakly in $\cC_{(0,\infty)}(\cD)$. 
Consider the sequence  $\nu^\ssup{n}:=\mu^\ssup{n}\circ\cI$ of probability measures on $\cV^d$. Since $\cV^d\subseteq\cV$ and $\cV$ is compact, this sequence is relatively compact w.r.t.\ the topology of weak convergence. 
Moreover, by continuous mapping also $\cL\big((V_t)_{t>0}\,\big|\,\p_{\nu^\ssup{n}} \big)$ converges weakly in $\cC_{(0,\infty)}(\cV)$. 
But this implies (see \eqref{eq:proof_convergence}) that for any two limit points $\nu$ and $\tilde\nu$ of the sequence  $(\nu^\ssup{n})_{n\in\N}$, we must have 
$\cL\big((V_t)_{t>0}\,\big|\, \p_{\nu}\big)=\cL\big((V_t)_{t>0}\,\big|\, \p_{\tilde\nu}\big)$ on $\cC_{(0,\infty)}(\cV)$ and thus $\nu=\tilde\nu$ (recall \eqref{eq:unique}).
Thus $(\nu^\ssup{n})_{n\in\N}$ must converge weakly to some probability measure $\nu$ on $\cV$. 

Finally, let $(\mu_t)_{t>0}$ be any probability entrance law for the semigroup $(P_t)_{t>0}$ of aBMs on $\cD$. Let $\nu$ be the (unique) probability measure on $\cV$ corresponding to the entrance law in view of Theorem \ref{thm:characterization_entrance_laws}.
Since $\cV^d$ is dense in $\cV$, there is a sequence $(\nu^\ssup{n})_{n\in\N}$ of probability measures concentrated on $\cV^d$ such that $\nu^\ssup{n}\to\nu$ weakly as $n\to\infty$. 
Then putting $\mu^\ssup{n}:=\nu^\ssup{n}\circ \cI^{-1}$ and again using the Feller property of $(\hat T_t)_{t\ge0}$ as well as \eqref{identity_semigroups}, we get
\[\mu_t=\nu\hat T_t\circ\cI^{-1}=\lim_{n\to\infty}\nu^\ssup{n}\hat T_t\circ\cI^{-1}=\lim_{n\to\infty}\nu^\ssup{n}T_t\circ\cI^{-1}=\lim_{n\to\infty}\mu^\ssup{n}P_t,\qquad t>0,\]
concluding the proof.
\end{proof}

We continue with the proofs of the results in Section~\ref{sec:densities}. 

\begin{proof}[Proof of Proposition \ref{prop:density-n}]
{
Let $v=[u]\in\cV$ and assume that $u$ is not identically $0$ or $1$, since otherwise the statement is trivial.
We will show that for each $\bfx\in \bR^{n,\uparrow}$ and $t>0$, we have
\begin{align}\label{eq:density-variant}
p_{[u]}(t,\bfx) = 
\lim_{\epsilon\downarrow0}\frac{1}{(2\epsilon)^n}\, \bP_v\left(\bigcap_{i=1}^n \big\{|\bfX_t\cap [x_i-\epsilon,x_i+\epsilon]|\text{ is odd}\big\}\right).
\end{align}

In order to show \eqref{eq:density-variant}, let $(u_t)_{t \geq 0}$ be the $\mathrm{CSVM}_u$-process from Theorem~\ref{thm:voter}.
By the representation of entrance laws in Theorem~\ref{thm:characterization_entrance_laws}b), the $n$-particle density is given by
\begin{equation}
p_{[u]}(t,\bfx)=\lim_{\epsilon\to0}\frac{1}{(2\epsilon)^n}\bP_{u}\left(\bigcap_{i=1}^n\big\{\cI(u_t)\cap[x_i-\epsilon,x_i+\epsilon]\neq\emptyset\big\}\right),\qquad \bfx\in \bR^{n,\uparrow},\;t>0. 
\end{equation}

First we argue that for any $x\in\bR$,
\begin{align}\label{eq:one-interface-only}
\lim_{\epsilon\to0} \bP_{u}\Big( |\cI(u_t)\cap [x-\epsilon,x+\epsilon]|>1 \;\Big|\;  |\cI(u_t)\cap [x-\epsilon,x+\epsilon]|\geq 1 \Big)=0.
\end{align}
We will use the graphical construction of $u_t(x)=\chi_{\widehat\cW_0^\ssup{t,x}}$ via the dual Brownian web, recall \eqref{eq:graphical-construction}. 
Again we may assume that $\cI(u_t)\cap\{x-\epsilon,x+\epsilon\}=\emptyset$ since this event has full probability under $\p_u$.
By the coalescence property, $\widehat\cW^\ssup{t,x-\epsilon}_0 = \widehat\cW^\ssup{t,x+\epsilon}_0$ implies that $\widehat\cW^\ssup{t,y_1}_0=\widehat\cW^\ssup{t,y_2}_0$ for all $y_1,y_2\in [x-\epsilon,x+\epsilon]$, and therefore $\cI(u_t)\cap [x-\epsilon,x+\epsilon]=\emptyset$. Therefore the existence of an interface point in $[x-\epsilon,x+\epsilon]$ implies $\widehat\cW^\ssup{t,x-\epsilon}_0\neq \widehat\cW^\ssup{t,x+\epsilon}_0$. 
Similarly, the existence of two interface points in $[x-\epsilon,x+\epsilon]$ implies that there exists $y\in[x-\epsilon,x+\epsilon]$ so that $\widehat\cW^\ssup{t,x-\epsilon}_0\neq \widehat\cW^\ssup{t,y}_0$ and $\cW^\ssup{t,y}_0\neq \cW^\ssup{t,x+\epsilon}_0$. 
As $\epsilon\to0$, the probability that three Brownian motions started in $[x-\epsilon,x+\epsilon]$ do not meet decays faster than the probability that $\widehat\cW^\ssup{t,x-\epsilon}_0\neq \widehat\cW^\ssup{t,x+\epsilon}_0$, proving 
\begin{align}\label{eq:one-interface2}
\lim_{\epsilon\to0} \bP_{}\left( |\cI(u_t)\cap [x-\epsilon,x+\epsilon]|>1 \;\middle|\;  \widehat\cW^\ssup{t,x-\epsilon}_0\neq \widehat\cW^\ssup{t,x+\epsilon}_0 \right)=0.
\end{align}
Further, note that $\{u_t(x-\epsilon)\neq u_t(x+\epsilon)\}$ 
implies the existence of an interface point in $[x-\epsilon,x+\epsilon]$, thus 
\begin{align*}
&\frac{\bP_{}\left( |\cI(u_t)\cap [x-\epsilon,x+\epsilon]|>1 \right)} {\bP_{}\left(|\cI(u_t)\cap [x-\epsilon,x+\epsilon]|\geq 1 \right)}	
\leq \frac{\bP_{}\left( |\cI(u_t)\cap [x-\epsilon,x+\epsilon]|>1 \right)} {\bP_{}\left( u_t(x-\epsilon)\neq u_t(x+\epsilon) \right)}	\\
&= \frac{\bP_{}\left( |\cI(u_t)\cap [x-\epsilon,x+\epsilon]|>1 \;\middle|\; \widehat\cW^\ssup{t,x-\epsilon}_0\neq \widehat\cW^\ssup{t,x+\epsilon}_0 \right)} {\bP_{}\left( u_t(x-\epsilon)\neq u_t(x+\epsilon) \;\middle|\; \widehat\cW^\ssup{t,x-\epsilon}_0\neq \widehat\cW^\ssup{t,x+\epsilon}_0 \right)}.
\end{align*}
Together with \eqref{eq:one-interface2} and the fact that
\[ \liminf_{\epsilon\to0} \bP\left(u_t(x-\epsilon)\neq u_t(x+\epsilon)\;\middle|\; \widehat\cW^\ssup{t,x-\epsilon}_0\neq \widehat\cW^\ssup{t,x+\epsilon}_0 \right)>0,\]
we obtain \eqref{eq:one-interface-only}.
But this clearly implies
\begin{align}
p_{[u]}(t,x)&=\lim_{\epsilon\to0}\frac{1}{2\epsilon}\,\bP_{u}\left(\big\{\cI(u_t)\cap[x-\epsilon,x+\epsilon]\neq\emptyset\big\}\right)\\
&=\lim_{\epsilon\to0}\frac{1}{2\epsilon}\,\bP_{u}\left(\big\{|\cI(u_t)\cap[x-\epsilon,x+\epsilon]|=1\big\}\right)
\end{align}
and therefore also 
\[p_{[u]}(t,x)=\lim_{\epsilon\to0}\frac{1}{2\epsilon}\,\bP_{u}\left(\big\{|\cI(u_t)\cap[x-\epsilon,x+\epsilon]|\text{ is odd}\big\}\right),\qquad x\in\R,\; t>0,\]
which is \eqref{eq:density-variant} for $n=1$. Clearly, at the expense of more notation, this argument can be generalized, proving \eqref{eq:density-variant} for arbitrary $n\in\N$.
Now \eqref{eq:n-point-density} follows directly from \eqref{eq:correspondence_0}.
}
\end{proof} 

\begin{proof}[Proof of Thm.\ \ref{thm:density}]
{
By Proposition \ref{prop:density-n}, we know that
\begin{align*}
p_{[u]}(t,x) = \lim_{\epsilon\to0} \frac{1}{2\epsilon}\, \p\left(\chi_{Y_t^\ssup{x-\epsilon}}\ne\chi_{Y_t^\ssup{x+\epsilon}}\right),\qquad x\in \bR,\;t>0. 
\end{align*}
With $\tau^\ssup{x,\epsilon}:=\inf\{s>0:Y_{s}^\ssup{x-\epsilon}=Y_{s}^\ssup{x+\epsilon}\}$ 
denoting the coalescence 
time of the two Brownian motions, we use that the above probability is zero conditioned on $\{\tau^\ssup{x,\epsilon}\leq t\}$. 
Together with the fact that conditionally on $\{\tau^\ssup{x,\epsilon}>t\}$, the random variables $\chi_{Y_t^\ssup{x\pm\epsilon}}$ are independent and Bernoulli distributed with parameter $u(Y_t^\ssup{x\pm\epsilon})$, we obtain that
\begin{align}
 \p\left(\chi_{Y_t^\ssup{x-\epsilon}}\ne\chi_{Y_t^\ssup{x+\epsilon}}\right)
 &=\E\left[\ind_{\{\tau^\ssup{x,\epsilon}>t\}}\,h_u(Y_t^\ssup{x-\epsilon},Y_t^\ssup{x+\epsilon})\right],
\end{align}
where 
$h_u(\cdot,\cdot)$ is the function defined in \eqref{eq:definition-h}. Consequently,
\begin{align}\label{eq:cond-not-meet}
p_{[u]}(t,x) = \lim_{\epsilon\to0} \frac{1}{2\epsilon}\, \bE\left[h_u(Y_t^\ssup{x-\epsilon},Y_t^\ssup{x+\epsilon})\;\middle|\; \tau^\ssup{x,\epsilon}> t\right]\bP(\tau^\ssup{x,\epsilon}> t).
\end{align}
The distribution of a two-dimensional Brownian motion ``started at $(0,0)$'' and conditioned to stay in $\bR^{2,\uparrow}$ for the time interval $[0,t]$ is known 
and has density
\begin{align}\label{eq:2-non-col-BM}
\frac{y_2-y_1}{2t^{3/2}\sqrt{\pi}}e^{-\frac{|\bfy|^2}{2t}},\qquad \bfy=(y_1,y_2) \in \bR^{2,\uparrow},
\end{align}
see e.g.\ \cite{KT03}, eq.\ (2.10).
Therefore, as $\epsilon\to0$, the conditional expectation in \eqref{eq:cond-not-meet} converges to 
\begin{align*}
\int_{\bR^{2,\uparrow}} h_u(x+y_1,x+y_2)\frac{y_2-y_1}{2t^{3/2}\sqrt{\pi}}e^{-\frac{|\bfy|^2}{2t}} d\bfy.
\end{align*}
Moreover, by the reflection principle and the fact that the difference $Y_t^\ssup{x+\epsilon}-Y_t^\ssup{x-\epsilon}$ is a Brownian motion running at twice the speed, we have
\[ \bP(\tau^\ssup{x,\epsilon}>t) = 1-2\bP_0(B_{2t}>2\epsilon) = \int_{-2\epsilon}^{2\epsilon} \frac{1}{\sqrt{2\pi(2t)}}e^{-\frac{r^2}{4t}}dr = \frac{2\epsilon}{\sqrt{\pi t}} + o(\epsilon). \]
Thus, we obtain from \eqref{eq:cond-not-meet} that
\begin{align*}
p_{[u]}(t,x) = \frac{1}{2\pi t^2}\int_{\bR^{2,\uparrow}} h_u(x+y_1,x+y_2)(y_2-y_1)e^{-\frac{|\bfy|^2}{2t}} d\bfy.
\end{align*}
By symmetry of $h_u$, we also have
\begin{align*}
p_{[u]}(t,x) = \frac{1}{2\pi t^2}\int_{\bR^{2,\downarrow}} h_u(x+y_1,x+y_2)|y_2-y_1|e^{-\frac{|\bfy|^2}{2t}} d\bfy,
\end{align*}
and hence
\begin{align*}
p_{[u]}(t,x) 
&= \frac{1}{4\pi t^2}\int_{\bR^{2}} h_u(x+y_1,x+y_2)|y_2-y_1|e^{-\frac{|\bfy|^2}{2t}} d\bfy	\\
&= \frac{1}{4\pi t^2}\int_{\bR^{2}} 2u(x+y_1)(1-u(x+y_2))|y_2-y_1|e^{-\frac{|\bfy|^2}{2t}} d\bfy.
\end{align*}
}
\end{proof}

\begin{proof}[Proof of Prop.\ \ref{thm:n-point-density-thinned}]
Fix $\bfx\in \bR^{n, \uparrow}$. Consider $u\in\cM^d_1(\bR)$.  
Then we partition $\cI(u)$ into two disjoint sets $\cI_1(u)$ and $\cI_2(u)$ by saying that any $  z \in \cI(u)$ is also in $\cI_1(u)$   if $u([z-\eps,z])(1-u)([z,z+\eps])) > 0$
for all sufficiently small $\eps > 0$ and setting $\cI_2(u) := \cI(u) \setminus \cI_1(u)$.
{
Then the thinning procedure corresponds to randomly choosing $\cI_1(u)$ or $\cI_2(u)$ with probability $\tfrac12$ each.
Similarly to the proof of Proposition \ref{prop:density-n} (recall \eqref{eq:one-interface-only}), we obtain that
\begin{align}
&\lim_{\epsilon\downarrow0} \frac{1}{(2\epsilon)^n}\,\bP_u\left(\bigcap_{i=1}^n \big\{\cI_1(u_t)\cap [x_i-\epsilon,x_i+\epsilon] \neq \emptyset\big\} \right)\\
&=\lim_{\epsilon\to0}\frac{1}{(2\epsilon)^n}\,\p_u\left(\bigcap_{i=1}^n\big\{|\cI_1(u_t)\cap[x_i-\epsilon,x_i+\epsilon]|=|\cI(u_t)\cap[x_i-\epsilon,x_i+\epsilon]|=1\big\}\right)\\
&=\lim_{\epsilon\to0}\frac{1}{(2\epsilon)^n}\,\p_u\left(\bigcap_{i=1}^n\big\{u_t(x_i-\epsilon)=1,u_t(x_i+\epsilon)=0\big\}\cap\big\{|\cI(u_t)\cap[x_i-\epsilon,x_i+\epsilon]|=1\big\}\right)\\
&=\lim_{\epsilon\to0}\frac{1}{(2\epsilon)^n}\,\p_u\left(\bigcap_{i=1}^n\big\{u_t(x_i-\epsilon)=1,u_t(x_i+\epsilon)=0\big\}\right),
\end{align}
where for the second equality we used that on the event $\big\{|\cI(u_t)\cap[x_i-\epsilon,x_i+\epsilon]|=1\big\}$ that there is exactly one interface point in $[x_i-\epsilon,x_i+\epsilon]$, this interface point belongs to $\cI_1(u_t)$ iff $u_t(x_i-\epsilon)=1$ and $u_t(x_i+\epsilon)=0$. 
Now we use again the graphical construction \eqref{eq:graphical-construction} and argue as in the proof of Thm.\ \ref{thm:density}: 
With $\tau^\ssup{\bfx, \epsilon}$ denoting the first collision time in the system of coalescing Brownian motions $\widehat\cW_0^\ssup{t,x_i\pm\epsilon}$, 
we have
\begin{align}
 &\p_u\left(\bigcap_{i=1}^n\big\{u_t(x_i-\epsilon)=1,u_t(x_i+\epsilon)=0\big\}\right)=\p\left(\bigcap_{i=1}^n\big\{\chi_{\widehat\cW_0^\ssup{t,x_i-\epsilon}}=1,\chi_{\widehat\cW_0^\ssup{t,x_i+\epsilon}}=0\big\}\right)\\
&=\p\left(\bigcap_{i=1}^n\big\{\chi_{\widehat\cW_0^\ssup{t,x_i-\epsilon}}=1,\chi_{\widehat\cW_0^\ssup{t,x_i+\epsilon}}=0\big\}\,\middle|\,\{\tau^\ssup{\bfx,\epsilon}>t\}\right)\,\p(\tau^\ssup{\bfx,\epsilon}>t)\\
&=\E\left[\prod_{i=1}^n u(\widehat\cW_0^\ssup{t,x_i-\epsilon})(1-u(\widehat\cW_0^\ssup{t,x_i+\epsilon}))\,\middle|\, \tau^\ssup{\bfx,\epsilon}>t\right]\,\p(\tau^\ssup{\bfx,\epsilon}>t).
\end{align}
and thus
\begin{align*}
&\lim_{\epsilon\downarrow0} \frac{1}{(2\epsilon)^n}\,\bP_u\left(\bigcap_{i=1}^n \big\{\cI_1(u_t)\cap [x_i-\epsilon,x_i+\epsilon] \neq \emptyset\big\} \right)\\
&= \lim_{\epsilon\downarrow0} \frac{1}{(2\epsilon)^n}\,\E\left[\prod_{i=1}^n u(\widehat\cW_0^\ssup{t,x_i-\epsilon})(1-u(\widehat\cW_0^\ssup{t,x_i+\epsilon}))\,\middle|\, \tau^\ssup{\bfx,\epsilon}>t\right]\,\p(\tau^\ssup{\bfx,\epsilon}>t).
\end{align*}

Analogously,
\begin{align*}
&\lim_{\epsilon\downarrow0} \frac{1}{(2\epsilon)^n}\,\bP_u\left(\bigcap_{i=1}^n \big\{\cI_2(u_t)\cap [x_i-\epsilon,x_i+\epsilon] \neq \emptyset\big\} \right)\\
&= \lim_{\epsilon\downarrow0} \frac{1}{(2\epsilon)^n}\,\bE\left[\prod_{i=1}^n (1-u(\widehat\cW^\ssup{t,x_i-\epsilon}_0))u(\widehat\cW^\ssup{t,x_i+\epsilon}_0)\,\middle|\,\tau^\ssup{\bfx,\epsilon}>t\right]\,\p(\tau^\ssup{\bfx,\epsilon}>t).
\end{align*}
With $q(t,\bfx)=\lim_{\epsilon\to0}\frac{1}{(2\epsilon)^n}\,\p(\tau^\ssup{\bfx,\epsilon}>t)$, the claim follows by choosing $\cI_1$ or $\cI_2$ with probability $\tfrac12$.
}
\end{proof}

\appendix
\section{Appendix}

\subsection{On the construction of annihilating and coalescing Brownian motions}\label{sec:construction}
The construction of a \emph{finite} system of aBMs (or cBMs) is of course straightforward: If $\bfx\subseteq\R$ is finite, take a collection of independent Brownian motions $\{(B_t^\ssup
{x})_{t\ge0}:x\in\bfx\}$ indexed by and starting from $\bfx$, and let them run until the first collision time
\[\tau:=\inf\{t>0\,|\, \exists\, y,z\in\bfx,\,y\ne z: B_t^\ssup{y}= B_t^\ssup{z}\}>0.\]
Note that the collision pair $(y,z)$ is uniquely defined. At time $\tau$, restart the system with the new initial condition where the collision pair is removed from the configuration. 
An analogous procedure works for cBMs: 
{
Here, we do not remove the collision pair, but replace it by (two copies of) a single Brownian motion, reflecting the fact that the colliding particles `merge' and evolve together.
}

If the initial condition $\bfx\in\cD$ is infinite, clearly the above procedure does not work any longer since it may happen that the first collision time $\tau$ equals zero with positive probability. 
The obvious idea to deal with this problem is to approximate the (discrete, hence countable) set $\bfx=\{x_1,x_2,\ldots\}$ by finite sets $\bfx_{n}:=\{x_1,\ldots,x_n\}$ 
and to show that as $n\to\infty$, the system of aBMs $\bfX^{\bfx_n}$ 
converges in a suitable sense to $\bfX^\bfx$. 
See for example \cite[Sec. 4.1]{TZ11} for a weak convergence approach which works for both aBMs and cBMs. 

Another possibility is to define the infinite annihilating system by restriction from the corresponding infinite coalescing system, the latter of which can be constructed by \emph{monotonicity}: 
For each $n\in\N$, let $(\bfY^{\bfx_n}_t)_{t\ge0}$ denote a system of cBMs starting from the finite set $\bfx_n$.
It is well-known that there exists a coupling such that almost surely,
\[\bfY^{\bfx_n}_t\subseteq\bfY^{\bfx_{n+1}}_t\qquad\text{for all }t>0,\,n\in\bN.\]
Using this, the infinite coalescing system $(\bfY_t^{\bfx})_{t\ge0}$ can be constructed pathwise as a monotone limit. 
(Note that this monotonicity property does not hold for aBMs, since adding another annihilating Brownian motion by going from $\bfx_n$ to $\bfx_{n+1}$ might kill a previous one.)
Having constructed the infinite coalescing system, define for $y\in\bfY_t^\bfx$  
\[C(t,y):=\#\{x \in \bfx : \text{there exists a coalescing path from }(0,x) \text{ to } (t,y)\} \]
and let
\[\bfX^\bfx_t:=\{y\in\bfY^\bfx_t\,|\,C(t,y)\text{ is odd}\}.\]
Then it is easy to see that $C(t,y)$ is almost surely finite and that $(\bfX^\bfx_t)_{t\ge0}$ defines a system of aBMs starting from $\bfx$, see \cite[Lemma 5.15]{HOV15}. 
Moreover, we have $\bfX_t^{\bfx_n}\to\bfX_t^\bfx$ \emph{pathwise} almost surely, although the limit is not monotone w.r.t. set inclusion.

\subsection{Technical lemmas}\label{sec:technicalities}
\begin{lemma}\label{lem:compact}
The space $\cM_1(\R)$ with the topology of vague convergence is metrizable and compact.
\end{lemma}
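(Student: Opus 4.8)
The plan is to identify $\cM_1(\R)$ with the order interval
\[ B:=\{u\in L^\infty(\R):0\le u\le 1\text{ Lebesgue-a.e.}\}\subseteq L^\infty(\R)=\big(L^1(\R)\big)^*, \]
and to show that the vague topology on $\cM_1(\R)$ is precisely the restriction to $B$ of the weak-$*$ topology $\sigma(L^\infty,L^1)$. Since two densities agreeing a.e.\ define the same measure, the map $u\mapsto u(x)\,dx$ is a bijection of $B$ onto $\cM_1(\R)$, so the statement reduces to the (by now standard) fact that the weak-$*$ topology makes $B$ compact and metrizable.

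First I would verify the topology identification. One inclusion is trivial, since $\cC_c(\R)\subseteq L^1(\R)$ and hence weak-$*$ convergence implies vague convergence. For the converse, given $\psi\in L^1(\R)$ and $\eps>0$, pick $\phi\in\cC_c(\R)$ with $\|\psi-\phi\|_{L^1}<\eps$ (density of $\cC_c$ in $L^1$); then for every $u\in B$,
\[ |\langle u,\psi-\phi\rangle|\le\|u\|_{L^\infty}\,\|\psi-\phi\|_{L^1}\le\eps, \]
uniformly in $u$, so vague convergence of a net in $B$ upgrades to weak-$*$ convergence. The uniform bound $\|u\|_{L^\infty}\le 1$ is exactly what makes the two topologies coincide on $B$.

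For compactness, I would note that $B$ lies in the closed unit ball of $L^\infty(\R)=(L^1(\R))^*$, which is weak-$*$ compact by Banach--Alaoglu. Moreover $B$ is weak-$*$ closed: the condition $u\ge0$ a.e.\ reads $\langle u,\phi\rangle\ge0$ for all $0\le\phi\in L^1(\R)$, and $u\le1$ a.e.\ reads $\langle u,\phi\rangle\le\int_\R\phi$ for all such $\phi$, so $B$ is an intersection of weak-$*$ closed half-spaces. A weak-$*$ closed subset of a weak-$*$ compact set is compact, whence $B$ is weak-$*$ compact. For metrizability I would use that $L^1(\R)$ is separable, so that the weak-$*$ topology restricted to the norm-bounded set $B$ is metrizable; alternatively one can exhibit the explicit metric $d(u,v)=\sum_k 2^{-k}(1\wedge|\langle u-v,\phi_k\rangle|)$ for a countable family $(\phi_k)_{k\in\N}$ dense in $L^1(\R)$, and check via the previous paragraph that it induces the vague topology.

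The only genuinely delicate point is the identification of the two topologies on $B$; everything else is off-the-shelf functional analysis. If one prefers to avoid duality, the same conclusions follow by a sequential/diagonal argument: extract a subsequence along which $\langle u_n,\phi\rangle$ converges for all $\phi$ in a countable dense subset of $\cC_c(\R)$, extend to all of $\cC_c(\R)$ using the uniform mass bound $u_n(K)\le|K|$, represent the resulting positive functional by a Radon measure $\mu$ via Riesz, and finally observe that $\langle\mu,\phi\rangle\le\int_\R\phi$ for all $\phi\ge0$ forces $\mu\ll\Leb$ with density in $[0,1]$, i.e.\ $\mu\in\cM_1(\R)$.
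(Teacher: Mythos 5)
Your proposal is correct and follows essentially the same route as the paper: embed $\cM_1(\R)$ into the closed unit ball of $L^\infty(\R)=(L^1(\R))^*$, identify vague convergence with weak-$*$ convergence there, and invoke Banach--Alaoglu together with weak-$*$ closedness and separability of $L^1$ for compactness and metrizability. You merely spell out the steps the paper dismisses as ``easy to see'' (the topology identification via the uniform bound $\|u\|_{L^\infty}\le1$ and the closedness of the order interval), and these details are accurate.
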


\begin{proof}
We clearly have $\cM_1(\R)\subseteq B$, where $B$ denotes the closed unit ball in $L^\infty(\R)$.
It is well known that the latter space is (isometrically isomorphic to) the topological dual of $L^1(\R)$,
and easy to see that vague convergence on $B$ is equivalent to convergence w.r.t.\ the weak-$*$-topology of $L^\infty(\R)=L^1(\R)^*$ restricted to $B$.
Moreover, it is easy to check that $\cM_1(\R)$ is vaguely closed in $B$. 
As a consequence of the Banach-Alaoglu theorem (see e.g.\ \cite[Cor.\ V.4.3]{DS58}, $\cM_1(\R)$ is compact in the weak-$*$-topology, hence vaguely compact.
Moreover, by \cite[Thm.\ V.5.1]{DS58} it is also metrizable.
\end{proof}

\begin{lemma}\label{lem:dense}
$\cM_1^d(\R)$ is dense in $\cM_1(\R)$.
\end{lemma}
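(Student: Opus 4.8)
The plan is to show that every $u\in\cM_1(\R)$ can be approximated in the vague topology by $\{0,1\}$-valued step functions whose set of discontinuities is locally finite; any such step function lies in $\cM_1^d(\R)$, since its interface $\cI(\cdot)$ is contained in the (locally finite, hence discrete and closed) jump set. Because the vague topology is metrizable (Lemma~\ref{lem:compact}) and its neighbourhoods are generated by finitely many test functions $\phi\in\cC_c(\R)$, it suffices to produce, for each $\delta>0$, an explicit approximant $v_\delta\in\cM_1^d(\R)$ and to verify that $v_\delta\to u$ vaguely as $\delta\downarrow0$.

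First I would fix $\delta>0$, partition $\R$ into the intervals $I_k=[k\delta,(k+1)\delta)$, $k\in\Z$, and record the local masses $a_k:=\tfrac1\delta\int_{I_k}u(x)\,dx\in[0,1]$. I then define $v_\delta$ to equal $1$ on the sub-block $[k\delta,\,k\delta+a_k\delta)$ and $0$ on the remainder of $I_k$. By construction $v_\delta$ is $\{0,1\}$-valued, and its set of discontinuities is contained in $\{k\delta,\,k\delta+a_k\delta:k\in\Z\}$, which has at most two points in each $I_k$ and is therefore locally finite. Consequently $\cI(v_\delta)$ is contained in a locally finite set, so it is itself locally finite (and closed), i.e.\ $\cI(v_\delta)\in\cD$, giving $v_\delta\in\cM_1^d(\R)$.

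It remains to check vague convergence. Fix $\phi\in\cC_c(\R)$ with $\supp\phi\subseteq[-R,R]$ and let $\omega_\phi(\delta)$ denote its modulus of continuity. On each interval $I_k$ I would compare $\phi$ with the constant $\phi(k\delta)$, incurring errors of order $\delta\,\omega_\phi(\delta)$ in both $\int_{k\delta}^{k\delta+a_k\delta}\phi\,dx$ and $\int_{I_k}u\phi\,dx$; since by the choice of $a_k$ the two constant approximations agree (both equal $\phi(k\delta)\int_{I_k}u\,dx$), the per-cell difference is at most $2\delta\,\omega_\phi(\delta)$. As only $O(R/\delta)$ cells meet $\supp\phi$, summation yields
\[
\bigl|\langle v_\delta,\phi\rangle-\langle u,\phi\rangle\bigr|\ \le\ \sum_{k:\,I_k\cap\supp\phi\ne\emptyset} 2\delta\,\omega_\phi(\delta)\ \le\ (4R+4\delta)\,\omega_\phi(\delta),
\]
which tends to $0$ as $\delta\downarrow0$ since $\omega_\phi(\delta)\to0$. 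Hence $v_\delta\to u$ vaguely, establishing density.

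The main point to handle carefully is the tension between the two requirements: matching $u$ vaguely forces the approximant to carry the correct local mass $a_k\delta$ on each cell, whereas membership in $\cM_1^d(\R)$ forces a locally finite interface. The block construction resolves this by concentrating the required mass of each cell into a single sub-interval, producing at most two interface points per cell. I would also note that no alternating structure beyond discreteness of the interface has to be imposed by hand: for a $\{0,1\}$-valued density the alternation is automatic, and adjacent blocks that happen to touch (when $a_k=1$) simply merge without creating spurious interface points.
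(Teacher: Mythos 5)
Your proof is correct, but it takes a more direct route than the paper. The paper proceeds in stages: it first approximates constant densities $\lambda\in(0,1)$ by the lattice configurations $\sum_{k}\ind_{[\frac kn,\frac{k+\lambda}n]}$, then handles finite step functions by linearity, then invokes the density of step functions in $L^1(\R)$ to treat integrable $u$, and finally truncates a general $u$ to $[-K,K]$, obtaining a chain of dense inclusions. You instead give a single explicit ``mass-rearrangement'' construction: on each cell $[k\delta,(k+1)\delta)$ you concentrate the local mass $a_k\delta$ of $u$ into one sub-block, which produces a $\{0,1\}$-valued density with at most two interface points per cell, and you verify vague convergence by a direct per-cell estimate $2\delta\,\omega_\phi(\delta)$ summed over the $O(R/\delta)$ cells meeting $\supp\phi$. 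Your argument is self-contained (it does not need the $L^1$-density of step functions or a separate truncation step, since the construction is purely local and works for non-integrable $u$) and yields a quantitative rate in terms of the modulus of continuity of the test function; the paper's version is more modular but relies on an external density result. Your closing remarks on why $\cI(v_\delta)$ is contained in the locally finite jump set, and on merging blocks when $a_k=1$, correctly dispose of the only points where the construction could go wrong.
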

\begin{proof}
First consider $u(\cdot)\equiv\lambda\in(0,1)$ constant. 
Define $u^\ssup{n}\in\cM_1^d(\R)$ by 
\[u^\ssup{n}:=\sum_{k\in\Z}\ind_{[\frac{k}{n},\frac{k+\lambda}{n}]},\]
so that the interface is a translation invariant lattice $\cI(u^\ssup{n})=\frac{1}{n}(\Z+\{0,\lambda\})\in\cD$. 
Then we have
\[\langle u^\ssup{n},\phi\rangle=\sum_{k\in\Z}\int_{\frac{k}{n}}^{\frac{k+\lambda}{n}}\phi(x)\,dx\to\lambda\int_\R\phi(x)\,dx=\lambda\langle u,\phi\rangle\]
for all $\phi\in\cC_c(\R)$, i.e.\ $u^\ssup{n}\to u$ in the topology of $\cM_1(\R)$. 
By an analogous construction on compact intervals $[a,b]\subseteq\R$
and linearity, we see that we can approximate any step function
\[u=\sum_{j=1}^N\lambda_j\ind_{[a_j,b_j]},\qquad \lambda_j\in(0,1),\; a_j<b_j\]
by elements of $\cM_1^d(\R)$.
Write $\cT(\R)$ for the space of all step functions on $\R$. 
Now suppose that $u\in\cM_1(\R)$ is integrable. 
Since $\cT(\R)$ is dense in $L^1(\R)$ (w.r.t.\ the $L^1$-norm, see e.g.\ \cite[Thm.\ 1.18]{LL01}), 
we conclude that any such $u$ can be approximated by step functions in the topology of vague convergence. 
Finally, if $u\in\cM_1(\R)$ is arbitrary, we define $u^\ssup{K}:=u\ind_{[-K,K]}\in\cM_1(\R)\cap L^1(\R)$ so that $u^\ssup{K}\to u$ vaguely as $K\to\infty$. 
We have thus shown that
\[\cM_1^d(\R)\subseteq \cT(\R)\cap\cM_1(\R)\subseteq L^1(\R)\cap\cM_1(\R)\subseteq\cM_1(\R),\]
where each inclusion is dense w.r.t.\ the topology of vague convergence on $\cM_1(\R)$.  
This establishes the assertion of the lemma.
\end{proof}

{\bf Acknowledgments.} 
This project received financial support
by the German Research Foundation (DFG) within
the DFG Priority Programme 1590 `Probabilistic Structures in
Evolution', grant OR 310/1-1.

 \bibliographystyle{alpha}
 \bibliography{duality}

\end{document}